\newtheorem{propo}{Proposition} 
\let\myparagraph\subsection
\definecolor{bbyellow}{rgb}{1.0,1.0,0.8}
\title{On Primes, Graphs and Cohomology}
\author{Oliver Knill}
\date{August 20, 2016}
\address{Department of Mathematics \\ Harvard University \\ Cambridge, MA, 02138, USA }
\subjclass{05C10, 57M15, 68R10, 53A55, 37Dxx}
\keywords{Prime numbers, Morse theory, Graph Theory, Topology}
\begin{document}
\maketitle

\begin{abstract}
The counting function on the natural numbers defines a discrete Morse-Smale complex with a
cohomology for which topological quantities like Morse indices, Betti numbers
or counting functions for critical points of Morse index are explicitly given in number theoretical
terms. The Euler characteristic of the Morse filtration is related to the Mertens function, 
the Poincar\'e-Hopf indices at critical points correspond to the values of the Moebius function.
The Morse inequalities link number theoretical quantities like the prime counting functions
relevant for the distribution of primes with cohomological properties of the graphs. 
The just given picture is a special case of a discrete Morse cohomology equivalent to simplicial
cohomology. The special example considered here is a case where
the graph is the Barycentric refinement of a finite simple graph. 
\end{abstract}

\section{Summary} 

\myparagraph{}
For an integer $n \geq 2$, let $G(n)$ be the graph with vertex set $V_n=\{ k \; | \; 2 \leq k \leq n, k \; {\rm square} \; {\rm free} \;\}$ 
and edges consisting of unordered pairs $(a,b)$ in $V$, where either $a$ divides $b$ or $b$ divides $a$. We
see this sequence of graphs as a Morse filtration $G(n) = \{ f \leq n \}$, where
$f$ is the Morse function $f(n)=n$ on the Barycentric refinement $G$ of the 
complete graph $P$ on the spectrum of the ring ${\mathbb Z}$. 
The Mertens function $M(n)$ relates by $\chi(G(n))=1-M(n)$ to the Euler characteristic 
$\chi(G(n))$.  By Euler-Poincar\'e, this allows to express $M(n)$ cohomologically 
as the sum $\sum_{k=1}^n (-1)^k b_k = 1-M(n)$ through Betti numbers $b_k$
and interpret the values $-\mu(k)$ of the M\"obius function as Poincar\'e-Hopf 
indices $i_f(x)=1-\chi(S^-_f(x))$ of the counting function $f(x)=x$. The additional $1$ in the Mertens-Euler
relationship appears, because the integer $1$ is not included in the set of critical points. 
The summation of the M\"obius values illustrates a case for the Poincar\'e-Hopf 
theorem $\sum_x i_f(x)=\chi(G(n))$ \cite{poincarehopf}.
The function $f$ is Morse in the sense that in its Morse
filtration $G(x) = \{ f \leq x \; \}$ the unit sphere $S(x)$ of any integer $x$, 
which was added last, is a graph theoretical sphere. The Morse index is then 
$m(x)={\rm dim}(S(x))+1$ and the strong Morse inequalities become elementary in this
case. 

\myparagraph{}
This just summarized story shows that {\bf ``counting" is a Morse theoretical 
process}, during which more and more `handles" in the form of topological 
balls are added, building an increasingly complex topological structure. 
When working on the graph $N$ of all natural numbers larger than $1$
with two numbers connected if one divides the other, substantial topological changes happen 
at square free integers while harmless homotopy deformations take place at the other points. 
In the homotopically equivalent graph $G$, where only square free vertices are considered,  
every point is a critical point. The dimension $m(x)$ of the handle 
formed by adding $x$ defines a Morse index $m(x) \in \{0,1,2 \dots \;\}$ satisfying 
$i_f(x)=(-1)^{m(x)}$ and allowing to define $c_m(x)$, 
the {\bf number of critical points} with index $m$ on $G(x)$.
The quantity $c_0(x) = \pi(x)+\pi(x/2)$ for example counts the zero-index critical point
instances, at which isolated vertices in the form of primes appear and disappear.
It relates the prime counting function $\pi(x)$ with a connectedness property
of the topological space. 
As in the continuum, the strong Morse inequalities relate the Betti numbers $b_k$ with
the counting functions $c_k$ of critical points. The zero'th Betti number 
is $b_0(G(x)) = 1+\pi(x)-\pi(x/2) \leq c_0(x)$. For $k>0$ we have
$b_k(G(x)) = \pi(k,x)-\pi(k,x/2) \leq c_k(x)=\pi(k,x) + \pi(k,x/2)$, where $\pi(k,x)$ counts the
number of prime $k$ tuples whose product is smaller or equal than $x$. The weak
Morse inequalities $b_k \leq c_k$ are trivial, the strong inequalities follow 
from the exponential decay of $c_k(x)-b_k(x)=2 \pi(k,x/2)$ in $k$. 

\myparagraph{}
If $n$ is a {\bf Kummer number} meaning that if $n+1$ is the product of the first $d+1$ rational primes,
then the graph $G(n)$ has an involutive {\bf time reversal symmetry} $k \to (n+1)/k$ which leads 
to {\bf Poincar\'e duality} $b_k=b_{d-k}$ for the Betti numbers of the connected component of the graph
$G(n+1)$. The connected component of $G(n+1)$ (disregarding the isolated primes) is the Barycentric refinement 
of $K_{d+1}$, the $d$-dimensional simplex formed on the first $d+1$ primes. 
We get a discrete version of a Morse-Smale system in which every 
vertex is a discrete analogue of a hyperbolic critical point whose stable and unstable 
manifolds behave as if they would intersect transversely. 
The Morse cohomology on this Morse-Witten complex is equivalent to the cohomology of 
the graph $P(n)$ which by Kuenneth is equivalent to the cohomology of the Barycentric 
refinement $G(n)$. There are other connections to number theory. 
The identity $\sum_{1 \neq d|n} \mu(d)=-1$ for example is a manifestation of
the fact that a ball has Euler characteristic $1$. 

\myparagraph{}
This arithmetic example of the natural numbers is a prototype. In full generality,
for any suitably defined Morse-Smale function on a finite simple graph, its Morse cohomology is
equivalent to simplicial cohomology. While this is not surprising given that the result holds
in the continuum, it is crucial to have the right definitions of stable and unstable manifolds
at critical points and a correct notion of Morse-Smale in the discrete. It turns out that these
definitions can be done for general finite simple graphs regularized by their Barycentric refinement.
The set of Morse functions might be pretty limited but if the graph is a $d$-graph meaning that
every unit sphere is a (purely graph theoretically defined) $(d-1)$-sphere called {\bf Evako sphere},
then the notion of Morse-Smale pretty much looks like the one in the continuum: every critical 
point must have the property that $S^-_f(x),S^-_{-f}(x)$ are spheres and that the 
corresponding stable and unstable manifolds for two different critical points $x,y$ of 
index $m,m-1$ intersect transversely or are disjoint. This allows
to define an intersection number $n(x,y)$ of two critical points through stable and unstable
manifolds and an exterior derivative just like in the continuum. The discrete
setup also illustrates why Floer's generalization to non-orientable and infinite dimensional setups
works. We don't need orientability of the graph in the discrete. 

\myparagraph{}
Given a finite simple graph $G=(V,E)$. 
A function $f:V \to \mathbb{R}$ is {\bf Morse} if it is locally injective and each subgraph of 
the unit sphere $S(x)$ generated by 
$S^-_f(x) = \{y \in S(x) \; | \; f(y)<f(x) \; \}$ is either contractible or a sphere. It is {\bf Morse-Smale}, 
if at every vertex $x \in V$, discrete stable and unstable manifolds $W^{\pm}_f(x)$ exist which pairwise either
do not intersect or intersect transversely in the sense that $W^-_f(y) \cap S(x) \subset S^-_f(x)$, allowing to 
define an intersection number $n(x,y)$ between different critical points: it is $1$ if the orientations
agree and $-1$ if the orientations don't agree. The Morse complex is then
defined exactly as in the continuum: the vector space $\Omega(G,f)$ of functions on critical points 
of $f$ is filtrated by Morse index: $\Omega_m(G,f)$ is the set of functions on critical points of 
Morse index $m$ and admits an exterior derivative $d: \Omega_m \to \Omega_{m+1}$ given by 
$dg(x) = \sum_{y} n(x,y) g(y)$. Morse cohomology is useful, as it is easier to compute as simplicial 
cohomology. In the discrete, it is also equivalent to \v{C}ech cohomology which is the simplicial 
cohomology of a nerve graph. The Morse function does the job of triangulating the graph.

\myparagraph{}
We expect Morse functions in the discrete to be abundant:
any injective function $f$ on the vertex set of a geometric graph $G$ should have
a natural extension to its Barycentric refinement $G_1$, where it is Morse in the sense
that adding a new vertex $x$ in the Morse filtration $G(x) = \{ f \leq x \; \}$ is either a 
homotopy deformation or then adds a handle in form of a graph theoretical $k$-ball $B^-_f(x)$, meaning that the 
level surface $\{ f=c \; \}$ for $c \notin {\rm Ran}(f)$ is always a graph theoretical 
sphere near the critical point $x$. The case of $f(x)={\rm dim}(x)$ and especially the 
arithmetic example introduced here illustrates that the graph does not need to have a discrete manifold structure 
in order for having a discrete Morse theory. In this note, we look only at the special case 
$f(x)={\rm dim}(x)$ on the Barycentric refinement of a general finite simple graph $G$. 
It will follow almost from the definitions that in this case, the Morse function 
$f(x) = {\rm dim}(x)$ on the Barycentric refinement $G_1$ of $G$ has a Morse cohomology which is identical 
to the simplicial cohomology of the original graph $G$. Because simplicial cohomology of $G$ is isomorphic
to simplicial cohomology of $G_1$ of a general statement that Morse cohomology is equivalent to
simplicial cohomology. In the continuum this is a well developed theory 
by Morse, Thom, Milnor, Smale or Floer. For a general theory for finite simple graphs, we need to 
restrict the Morse functions to a discrete class of Morse-Smale functions and verify that the 
Morse cohomology obtained is equivalent to simplicial cohomology on the graph. 
Since as just pointed out, there is always at least {\bf one} Morse function, we 
only have still to establish conditions under which deforming $f$ keeps the exterior derivative
defined and the corresponding cohomologies unchanged. This will require only the study of local
bifurcations of critical points. We plan to work on that elsewhere. 

\myparagraph{}
Denote by $B^-_f(x)$ the graph generated by the set of vertices in $S(x)$ for which 
$f(y) \leq f(x)$ and let $S^-_f(x)$ the graph generated by vertices in $S(x)$ for which $f(y) < f(x)$. 
If $S^-(x)$ is a discrete sphere so that $\chi(S^-(x))$ is either $0$ or $2$, 
the Morse index $m(x)={\rm dim}(B_-f(x))$ at $x$ is defined together by the Poincar\'e-Hopf index
as $i_f(x)=(-1)^{m(x)} = 1 - \chi(S^-_f(x))$.
It measures the change of Euler characteristic when adding the handle $B_f^-(x)$.
The discrete gradient flow defines a Morse-Smale complex whose Thom-Smale-Milnor cohomology is equivalent
to simplicial cohomology. In the case of a Barycentric refinement, it is equivalent to the original 
cohomology of $G$. If $c_k$ is the number of critical points of Morse index $k$, and $c(t)=\sum_k c_k t^k$ and
$p(t)=\sum_k b_k t^k$ are the Morse and Poincar\'e polynomials, then the strong Morse inequalities hold:
$c(t)-b(t)=(1+t)r(t)$, where $r(t)$ has positive coefficients. In our case this can be shown directly.
In general, an adaptation of the semi-classical analysis using Witten deformation \cite{Witten1982,Cycon} is needed. 
The {\bf Witten deformation} argument actually shows more: the heat flow of any of the deformed 
Laplacian $L_s$ defined by $f$ not only defines the Hurewicz homomorphism from the fundamental 
groups to the cohomology groups $H^k(G)$, it can be used to match a harmonic form representing a 
cohomology class with the sphere $S^-_f(x)$ defined
at the critical point $x$ of $f$ as in the limit $s \to \infty$ the support of any harmonic $k$-form
is on the union of $k$-spheres associated to critical points.

\section{Graphs}

\myparagraph{}
The {\bf natural numbers} $\mathbb{N}=\{1,2,3 \dots \}$ define a simple graph called the
{\bf integer graph} $N$. Its vertex set is $\mathbb{N}$. The edges are the pairs of
distinct integers for which one divides the other. This graph $N$ is {\bf simple}
as we allow no self loops, nor multiple connections.
Since $N$ is the unit ball of the integer $1$ and the unit $1$ is not of interest,
we better restrict to the unit sphere $S(1)$ of $1$. This graph $S(1)$ in turn is related 
to a more naturally defined subgraph which we call the {\bf prime graph} $G$. The vertices
of the prime graph $G$ are made of all square free integers in $N$. The graph $G$ can be identified with the 
{\bf Barycentric refinement} of the {\bf spectrum} $P$ of the commutative ring $\mathbb{Z}$, 
as the vertex set of $G$ is the set of square free integers different from $1$. The spectrum $P$, the set of
rational primes is a graph where all vertices are connected to each other. 
The graph $G$ then generates a sequence of graphs $G(n)$ generated by the vertex set 
$\{2,3, \dots, n\} \cap \mu^{-1}(\-1,1\})$ of all square free integers smaller or equal to $n$. 
Similar graphs $G_R$ could be constructed from any countable commutative ring $R$: just
take the Barycentric refinement of the spectrum of $R$ in which all vertices are connected.
On the finite ring $\mathbb{Z}/(n \mathbb{Z})$ for example, the graph would be the 
Barycentric refinement of the complete graph on the set of all prime factors of $n$. 

\myparagraph{}
We start with some graph theoretical notations.
Given any finite simple graph $G=(V,E)$ with {\bf vertex set} $V$ and edge
set $E$, a complete subgraph $K_{k+1}$ of $G$ with $k+1$ 
vertices is called a {\bf $k$-simplex}. The {\bf Euler characteristic} of $G$ is 
defined as $\sum_{k=0} (-1)^k v_k(G)$, where $v_k(G)$ is the number of 
$k$-simplices in $G$. Given a subset $W$ of the vertex set $V$, the graph {\bf generated} by $W$
is the graph $(W,F)$, where $F$ is the subset of all edges $E$ for which all 
attached vertices are in $W$. The {\bf unit sphere} $S(x)$ of a vertex $x$ in $G$ is the
graph generated by all the vertices directly connected to $x$. 
In the graph $N$ for example, the entire graph ring is the unit
ball of $1$ with Euler characteristic $1$ and look at the
unit sphere $S(1)$. In the graph $G(30)$ for example, the unit sphere of $v=30$ is
the cyclic sub graph generated by the vertex set $\{2,6,3,15,5,10 \;\}$. In $G(105)$, the unit
sphere of the latest added point $v=105$ is a stellated cube, which is a $2$-sphere.

\myparagraph{}
Following Evako, graph theoretical spheres are defined by induction \cite{KnillJordan}
first of all, the empty graph $G=(\emptyset,\emptyset)$ 
is a $(-1)$-sphere. Inductively, for $k \geq 0$, a {\bf $k$-graph} is a finite simple graph for which 
every unit sphere is a $(k-1)$-sphere. A $k$-sphere is a $k$-graph which after removing a vertex 
becomes contractible. A finite simple graph $G=(V,E)$ is {\bf contractible}, 
if there exists a vertex $v$ such that both the unit sphere $S(v)$ as well as the graph
generated by $V \setminus \{v\}$ are contractible. Two graphs are {\bf homotopic}
if one can get from one to the other by {\bf local deformation steps}. 
A deformation step either removes a vertex
$v$ with contractible $S(v)$ or adds a new vertex $v$ connected to a 
contractible subgraph $(W,F)$ of $(V,E)$. 
All these notions are combinatorially defined without referring to any Euclidean 
{\bf geometric realization}. The later would lead to classical topological notions 
but they are not needed. 

\myparagraph{}
Given two finite simple graphs $H,G$, the {\bf Cartesian product} $G \times H$ is defined 
as the graph whose vertices are the pairs $(x,y)$, where $x$ is a simplex in $G$ and 
$y$ is a simplex in $H$ and whose edge set consist of all $((a,b),(c,d))$, where 
either $(a,b)$ is contained in $(c,d)$ or $(c,d)$ is contained in $(a,b)$. 
This graph product has been ported from simplicial complex constructions to graph theory \cite{KnillKuenneth}
and has nice properties like that $\chi(H \times G) = \chi(H) \chi(G)$,
where $\chi(G) = \sum_{x \subset G} (-1)^{{\rm dim}(x)}$ is the Euler characteristic
of $G$ summing over all simplices $x$ in $G$. A special product is $G \times K_1$ which
is the {\bf Barycentric refinement} of $G$. Its vertices are the simplices of $G$
and two simplices are connected, if one is contained in the other. 
The product has all the properties we know from the continuum. It satisfies 
the K\"unneth formula for cohomology \cite{KnillKuenneth}. It has dimension 
${\rm dim}(x) = 1+\sum_{x \in V} {\rm dim}(S(x))/|V|$
which satisfies in general the inequality 
${\rm dim}(G \times H) \geq {\rm dim}(G) + {\rm dim}(H)$
which is familiar from Hausdorff dimension in metric spaces even so the discrete dimension
just defined has no formal relations to Hausdorff dimension in the continuum. 

\section{Morse theory}

\myparagraph{}
Lets look at an injective real-valued function $f:V \to \mathbb{R}$ on a countable simple graph $G=(V,E)$
so that we have a {\bf filtration} $G(n) = \{f \leq n \; \} \subset G(n+1)$ of finite simple subgraphs of $G$.
A locally injective function $f$ on the vertex set of a graph $G$ is called a {\bf Morse function}
if for every vertex $x \in V$, the graph $S^-_f(x)$ generated by
the set of vertices $y$ for which $f(y)<f(x)$ is a {\bf $m$-sphere} for some $m=m(x) \geq -1$. 
We call $S^-_f(x)$ the {\bf stable sphere} of $f$ at $x$. 
Adding a new prime $p$ for example to the graph $G(p-1)$ produces a critical point for which the 
unit sphere is the empty graph, and the empty graph is by definition the $(-1)$-sphere. 
When adding a vertex $x=pq$ with two primes $p,q$ which have not yet been
connected anywhere yet, then $x$ is a critical point for which a $0$-sphere has been added. 
By definition, a $0$-sphere is a $0$-graph (meaning that all unit spheres are $-1$-spheres
and that removing one vertex produces a contractible graph, so that a $0$ sphere consists of two
isolated discrete points). 
The Morse index of that added point is then $1$ because the added disc $B(x)$ is a 
$1$-ball which has a $0$-sphere as its boundary. In the prime graph $G$, this happens for $x=6$ for 
the first time. If $x=pq$ connects to two primes which are already connected like
for $x=15$, where $3-6-2-10-5$ already links $3$ and $5$, we close a loop. 
We see that adding a new vertex $p_1 p_2 \dots, p_k$ either destroys a $(k-2)$-
sphere or creates a $(k-1)$-sphere. 

\myparagraph{}
The {\bf stable sphere} $S^-_f(x)$ of $f$ at a vertex $x$ can be seen as
the intersection of the {\bf stable part} $\{ y \in G \; | \; f(y)<f(x) \; \}$ 
of the gradient flow of $f$ with the unit sphere in the full graph. 
Similarly, $S^-_f(x)$ generated by the vertices in the unit 
sphere of $x$, where $f(y)>f(x)$ is the {\bf unstable sphere}. 
In the case of the prime graph, the stable part of $x$ 
is just the set of vertices in the graph consisting of 
numbers dividing the vertex. Remarkably, the stable sphere 
is always a $m$-sphere for some $m$ allowing to define the
Morse index. Given two vertices $x,y$ the {\bf heteroclinic
connection} between $x$ and $y$ is the intersection of the stable
and unstable manifolds of $x$ and $y$. The heteroclinic
connection between $x=6$ and $y=210$ for example consists of
the vertices $\{6,30,42,210 \; \}$. 

\myparagraph{}
Since every vertex is a critical point, there is no discrete
gradient flow on the graph itself, at least when trying to define it
as in the traditional sense. The reason is that
every point would be a stationary point.
We still can think of the stable parts and unstable parts of the unit spheres as
part of ``stable and unstable manifolds". It would be possible to get
a classical gradient flow by doing a geometric realization of the 
simplicial complex and then define a gradient
flow on that manifolds. The easiest would
be to embed $G(x)$ in $R^{n}$ if $x$ is a product of $n$ primes
and map every point $y = p_0 \dots p_k$
to a point $x$ for which $x_{p_j}=1$ for every $0 \leq j \leq k$ and
$0$ else. Then define a suitable function $f(x)$ which embeds the
discrete geometry into the continuum. Doing so would be
inelegant as finite combinatorial problems should be dealt using finite sets. 

\myparagraph{}
A first observation which led us to write this note down is the realization
that the counting function $f(x)=x$ is a {\bf Morse function}
on the prime graph $G$: its Morse filtration is homotopic to the 
Morse filtration of the integer graph $S(1) \subset N$ and the Euler characteristic is 
related the {\bf Mertens function} $M(x) = \sum_{k=1}^n \mu(k)$ by the formula
$\chi(G(x))= 1-M(x)$. The {\bf Poincar\'e-Hopf index} $i_f(x) = 1-\chi(S^-_f(x))$, 
is the {\bf M\"obius function} $\mu(x)$ which is $1$ if $x$ is a product of an even
number of distinct primes and $-1$ if $x$ is a product of an odd number of distinct
primes and $0$ if there exists a square factor different from $1$ in $x$. At points, where
$\mu(x)=0$, the graph extension $G(x-1) \to G(x)$ is a homotopy deformation step, during
which no interesting topology changes happens. When restricting to the graph $G(n)$ 
whose vertices are the square-free integers $\leq n$, every vertex in the graph 
is a critical point. The formula $\chi(G(x)) = 1-M(x)$ can be seen as the 
Poincar\'e-Hopf formula, where the indices are $-\mu(x)$ and the Mertens function
includes the index $1$ of the point $1$, which we do not consider. Note that the 
integer graph $N = B(1)$ for which the vertex set is $\mathbb{N}$ 
has Euler characteristic $\chi(N)=1$ as it is the unit ball of a vertex. 

\myparagraph{}
At critical points $x$ of $f$, where $x$ gets
attached to a sphere $S^-_f(x)$ with Euler characteristic
$0$ or $2$, the {\bf Poincar\'e-Hopf index} is $i_f(x) = \pm 1$. 
If $B_f^-(x)$ is the graph generated by $x$ and $S^-(x)$, then 
the dimension $m=m(x)$ of the stable ball $B_f^-(x)$
is called the {\bf Morse index} of $x$. 
For a Morse function $f$ on a finite simple graph, denote by $c_m(f,G)$,
the number of critical points with Morse index $m$. In the case $G(x)$, we simply
write $c_m(x)$. The Morse counting functions $c_m(f,G)$ 
are of interest: for the Morse function $f(x)=x$ on the integer graph $G$ or 
prime graph $P$, we have $c_0(x) = \pi(x)+\pi(x/2)$ with {\bf prime counting function},
as every prime $p\leq x$ is a critical point with Morse index $0$ (the handle $\{x\}$
with $-1$ dimensional unit sphere $\emptyset$ has been added) 
and every number $2p \leq x$ is a critical point with Morse index $1$
(the $1$-ball $\{2,p,2p\}$ with 0-dimensional unit sphere $\{2,p\}$ was added).
As the prime counting function, also the functions $c_k(x)$ for larger $k$ could
be interesting. To make them accessible by Morse theory, we need cohomology. 

\section{Cohomology}

\myparagraph{}
Given a finite simple graph like $G(x)$, we can assign an orientation to its
maximal simplices. This does not need to be an orientation of the entire 
graph as we don't require the orientations to be compatible on 
intersections of two simplices. The choice of the orientations will not matter. It corresponds
to a choice of basis in a vector space. Cohomology is by Hodge a spectral notion which does
not depend on the choice of this basis: the groups are the kernels of Laplacians which are orientation
independent. Let $\Omega^k(G)$ denote the set of functions on $k$-simplices such that
changing the orientation of a simplex $x$ changes the sign of $f$. These are discrete
$k$-forms. Define the exterior derivative as $df(x) = f(dx)$, where $dx$ is the boundary chain of $x$.
Since the boundary operation $d$ satisfies $d^2 x=0$, the exterior derivative operation satisfies
$d^2 f=0$. The $k$'th {\bf cohomology group} $H^k(G)$ is defined as 
${\bf ker}(d|\Omega^k)/{\bf im}(d|\Omega^{k-1})$. The Hodge Laplacian $L=(d+d^*)^2$
is a $v \times v$ matrix with $v=\sum_{i=0}^{\infty} v_i$. It splits into blocks $L_k$. 
The dimension $b_k(G)$ of the vector space $H^k(G)$ is called the $k$'th {\bf Betti number}
of $G$. By Hodge theory, $b_k(G)$ is the nullity of the matrix $L_k$. 
For example, $b_0(G)$ is the number of connected components of $G$. 

\myparagraph{}
By the {\bf Euler-Poincar\'e formula}, we can now write 
$\chi(G(x)) = \sum_{k=0}^{\infty} (-1)^k b_k(x)$ with {\bf Betti numbers} $b_k(G)$.
Since we have a finite graph, this sum is finite. 
We see that the Mertens function has a cohomological interpretation. 
and that the {\bf Morse index} $m(x)$
which gives the dimension of the attached or removed handle
satisfies $-\mu(x) = i_f(x) = (-1)^{m)} = (b_m(x)-b_m(x-1))$. 
More number theoretical functions which relate to cohomology through 
Morse theory are obtained by counting critical points. 
We look at them next. 

\myparagraph{}
The number $c_m(x)$ of critical points for which $S^-_f(x)$ has dimension $m-1$ 
satisfies the {\bf Morse inequalities} $(-1)^p \sum_{k \leq p} (-1)^k [c_k-b_k] \geq 0$ or
$(-1)^p \sum_{k > p} (-1)^k [c_k-b_k] \leq 0$. The two just given statements are
equivalent since the {\bf Euler-Poincar\'e identity} reads $\sum_{k=0} (-1)^k [c_k-b_k] =0$.
Witten's idea using the deformed derivative $e^{-s f} d e^{s f}$ would
give even more information using semi-classical analysis.
The Morse counting functions $c_k(G)$ are of some
interest, as $c_0(x)$ grows like the {\bf prime counting function} $\pi(x)$. 
The function $c_1(x)$ related to closed loops 
grows at least like the {\bf prime triple counting function} $\pi_3(x)$
counting the number of triples $p,q,r$ of primes with $pqr \leq x$. 
More precisely, $c_1(x)  \geq  |\{ (p<q<r) \; | \; qr \leq x, pqr>x \; \}|$
$+|\{ (p<q<r) \; | \; pqr \leq x \; \}|$ which counts instances where circles
$p,pq,q,qr,r,rp$ were ``born" plus the number of discs $p,pq,q,qr,r,rp,pqr$
which are burial grounds, where circles have "died". 
The {\bf weak Morse inequalities} $b_k \leq c_k$ already lead to 
$\sum_{k=0}^n b_k \leq \pi^2 n/6$.

\myparagraph{}

Counting is a dramatic process which tells the story of the
life and death of spheres. While we can only look at the events,
where a square free number is added, we can also look at the full story. 
For $n=1$, space is the empty graph. At $2,3$, new $0$-balls are added.
At $n=4$ nothing interesting happens, as the longly $2$ forming a graph $K_1$
gets a homotopy deformation to a $K_2$.
At $n=5$ another $0$-sphere is added. At $n=6$, we see the creation of 
a $1$-ball and a destruction of the $0$-sphere $\{2,3\}$. 
At $n=7$ an other $1$-ball is added. At times $n=8$ and $n=9$, nothing spectacular 
takes place as we observe just homotopy deformations. 
At time $n=10$, a $1$-ball gets glued to the sphere $\{2,5\}$.
The first 1-sphere is born at $n=15$. This circle dies at $n=30$. The first 2-sphere 
sees the light at $n=105=3*5*7$ and dies at $n=210=2*3*5*7$. 
The unit sphere of a new square-free vertex $n$ is always a sphere $S_f(x)$ 
and the change of Euler characteristic is the index $i_f(x) = 1-\chi(S_f(x))$. When
adding $x$, the ball $B_f(x)$ is the handle which gets attached to the sphere $S_f(x)$. 
It either destroys the sphere $S_f(x)$ and adds a new sphere.

\myparagraph{}
The Betti numbers $b_k(G(x))$ reflect on essential parts of the geometry and so report
on more interesting feature of the topology of the graph $G(x)$.
Classically, Betti numbers were estimated by other means, especially
in terms of curvature and diameter. While we don't pursue this here, the prime graphs
give us a source of examples of graphs, where we can compute all 
cohomologies without actually computing the vector spaces from exterior derivative explicitly 
and where arbitrary high Betti numbers matter. One can speculate that some
work on curvature estimates could eventually shed some light on the Riemann hypothesis.
But certainly this does not happen on the elementary level we treat the topic here. 
We look at things the opposite way: we have 
a source of examples of graphs, where we can compute arbitrary high cohomology groups 
by other means. We have computed the cohomology groups up to $n=250$ the hard way 
explicitly using Hodge by computing the kernels of the Laplacians. 
It could be possible that the actual representatives of the cohomology groups, the 
{\bf harmonic forms}, vectors in the kernel of $L_k$ are of number theoretical
interest. Also still unexplored spectral properties 
of the form-Laplacians $L_k$ could be of number theoretical interest. 

\section{More remarks}

\myparagraph{}
It is more natural to use the {\bf ring} $\mathbb{Z}$ of integers rather than the semi-ring 
$\mathbb{N}$ of natural numbers. The {\bf spectrum} $P$ of $\mathbb{Z}$ is the set of {\bf prime ideals} 
in $\mathbb{Z}$ which naturally can be identified with the set of {\bf prime numbers}. 
An {\bf abstract simplicial complex} on $P$ is given by the collection $K$ of all 
finite non-empty subsets of $P$. It is the Whitney complex of the complete graph defined on $P$. 
The Barycentric refinement of the complex $K$ is the Whitney complex of the graph for
which the vertices are the square free positive integers different from $1$
and where two integers are connected, if one is a divisor of the other. As described in the
introduction, we call this the {\bf prime graph}. 

\myparagraph{}
Here is a bit more mundane approach which motivates to look at square free
integers: consider the finite set $P_n$ of primes in $I_n = \{ 2, \dots, n \;\}$.
The set of all subsets of $P_n$ for which the product is in $I_n$ is 
still a simplicial complex even so it is {\bf not a Whitney complex of a graph}. 
However, its {\bf Barycentric refinement} $K$ is the Whitney 
complex of a finite simple graph $G(n)=(V(n),E(n))$ which we call 
the {\bf Morse filtration} of the {\bf prime graph}:
the vertices $V(n)$ are the set of {\bf square free numbers} in $I_n$ 
and two vertices are connected if one is a divisor of the other. 
the {\bf prime numbers} are the atomic points and the division structure equips
$V(n)$ with a partial order structure. 
The integer $35$ for example contributes to the sets $\{3\},\{5\},\{3,5\}$ to $K$
It can be seen as a "line segment" as the intersecting ideal $[5\times 7]$ 
which connects the ideals $[5]$ and $[7]$. More generally,  
any square-free integer $x=p_1 p_2 \cdots p_k$ defines  a $(k-1)$-dimensional 
simplex or faces in $K$. It becomes a point in the prime graph $P$ containing
the finite graphs $G(n)$. The M\"obius function $\mu(x)$ which originally assigns
to a simplex $x$ of dimension $k$ the {\bf valuation} $\mu(x) = (-1)^k$ is now a 
{\bf function} on the vertices of $G(n)$ and it is the Poincar\'e-Hopf index $i_f(x)$ of the
counting function $f$.

\myparagraph{}
The Euler characteristic $\chi(G(n))$ of the prime graph is now equal to $1-M(n)$ as 
$1$ does not belong to the spectrum of $Z$. Geometrically speaking, $G(n)$ is a subgraph of the
{\bf unit sphere} of $1$ in $G(n)$. The entire integer graph $G(n)$ is a homotopy extension of the
{\bf unit ball} of $1$. Since the $k$-simplices in the unit ball of $1$ belong to
$(k-1)$-simplices in the unit sphere we get a sign change in $\chi$ and a shift in 
the dimension of cohomology for $k \geq 1$. 
Because nothing interesting happens at points $x$ which are a multiple of a square prime,
the Barycentric refinement $G(n)$ of the spectral simplicial 
complex is equivalent to the integer graph $G(n)$. Actually, including a vertex $n$ which 
contains a square larger than $1$ would produce a {\bf homotopy deformation} of the graph 
and not change the Euler characteristic. For example, removing $9$ from $G(12)$ does not
change its topology as the unit sphere of $9$ is the contractible set $\{3\}$.
We stick with the prime graphs $G(n)$ containing $Q(x)$ vertices 
rather the graphs $G(n)$ with vertex set $\{1, \dots,n \}$. 
Since the Basel problem renders $Q(x)$ proportional to $x$
with proportionality factor $\pi^2/6$, not much is gained from this homotopy reduction,
but it helps for visualization purposes to discard the homotopically irrelevant parts.
Figure~(\ref{figure1}) visually illustrates this.

\myparagraph{}
Euclid saw ``points" as `that which has no part", line segments as
a connection between two points and ``triangles" as a geometric objects defined by three points. 
The graph $G(n)$ is a geometric object containing points, line segments, triangles and
higher dimensional simplices, where the geometrically defined quantity $\chi(G(n))$ is 
of number theoretical interest. The Mertens function is a determinant of the Redheffer 
matrix and now also cohomologically expressible as kernels of matrices. 
The Moebius function $\mu(x)$ is the Poincar\'e Hopf index of
the scalar Morse function $f(x)=x$. If the $k$'th Betti number changes, 
we add a $k$-dimensional handle. For each $n$, the trivial weak Morse inequalities 
$b_p \leq c_p$ generalize to the statement that $c(t)-b(t)=(1+t)R(t)$ where $R(t)=\sum_k r_k t^k$
satisfies $r_k \geq 0$. The {\bf weak Morse inequalities} compare the coefficients of $c-b$
the {\bf strong Morse inequalities} are obtained by comparing the $t^p$ coefficients of 
$\sum_k r_k t^k = (\sum c_k t^k - \sum b_k t^k)(1-t+t^2 \dots)$.

\myparagraph{}
The graph $G(n)$ can contain "dust" in the form of isolated vertices. 
These zero-dimensional part consists of primes in $(n/2,n]$ not yet attached to the main graph.
Then there are "hairs", which are line segments $p,pq,q$ of primes $p,q$, where
$q$ is not connected to any other prime. 
Then there are basic "loops" $p,pq,q,qr,r,rp$ with triples of 
primes, where none of the composite numbers are connected to anything else. 
Two dimensional "disks" are obtained by triples of primes like that 
but with an additional central vertex $rpq$. Increasing $n$
reaching such a vertex is an example where we add a two dimensional handle. 
The first embedded 2-sphere only appears for $n=105=3*5*7$. Only at 
$n=1155=3*5*7*11$, the first $3$-sphere is present in $G(n)$. 

\myparagraph{}
If $n$ is the {\bf Kummer number} $n=2 \cdot 3 \cdot \dots \cdot p-1$, which is one less
than the product of the first $k$ primes, then the graph $G(n)$ has counting as a Morse-Smale
system. The reason is that we have then a {\bf time reversal involution} $k \to (n+1)/k$ 
on $G(n)$. While in general, the graphs $G(n)$ have a trivial automorphism group, in the case of the
Kummer number, we have an involution. Now, we not only have the stable manifold of a vertex
but also an unstable one. Every vertex $v$ is a {\bf hyperbolic critical point}. 
The stable sphere $S^-_f(x)$ can be seen as the analogue of defining the stable manifold. Due
to time reversal, we also have an unstable sphere $S^+_f(x)$ as part of the unstable manifold. 
Now given two vertices $x,y$ in $G(n)$, since both are critical points, we can look at the
{\bf connecting manifold} which can be empty. For example, in $P(209)$, we can connect the
two vertices $2$ and and $70$ via the homoclinic connection $2,10,70$. We have now
what one calls a {\bf Morse-Smale system}.

\section{Illustrations}


\begin{figure}[!htpb]
\scalebox{0.1}{\includegraphics{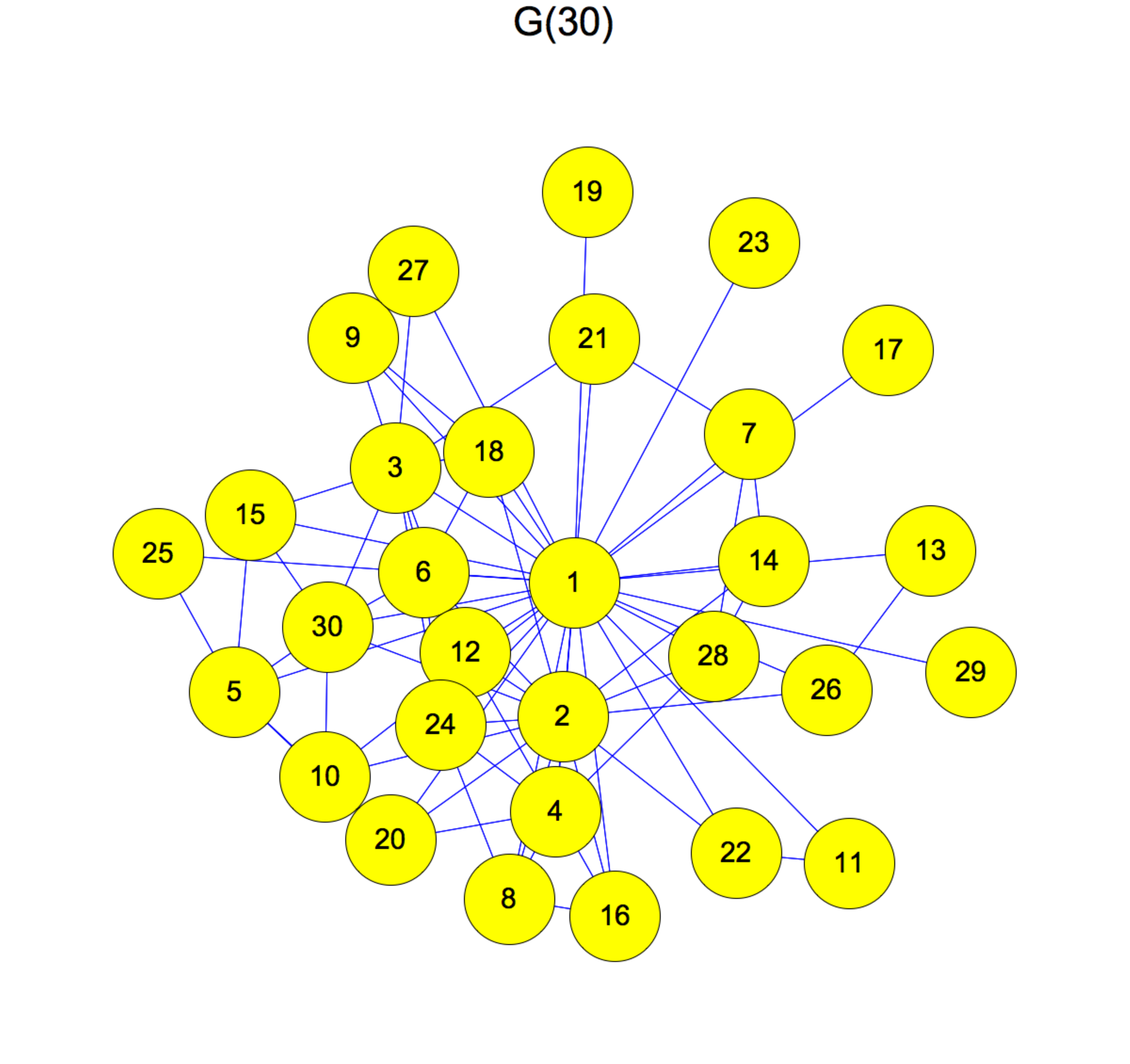}}
\scalebox{0.1}{\includegraphics{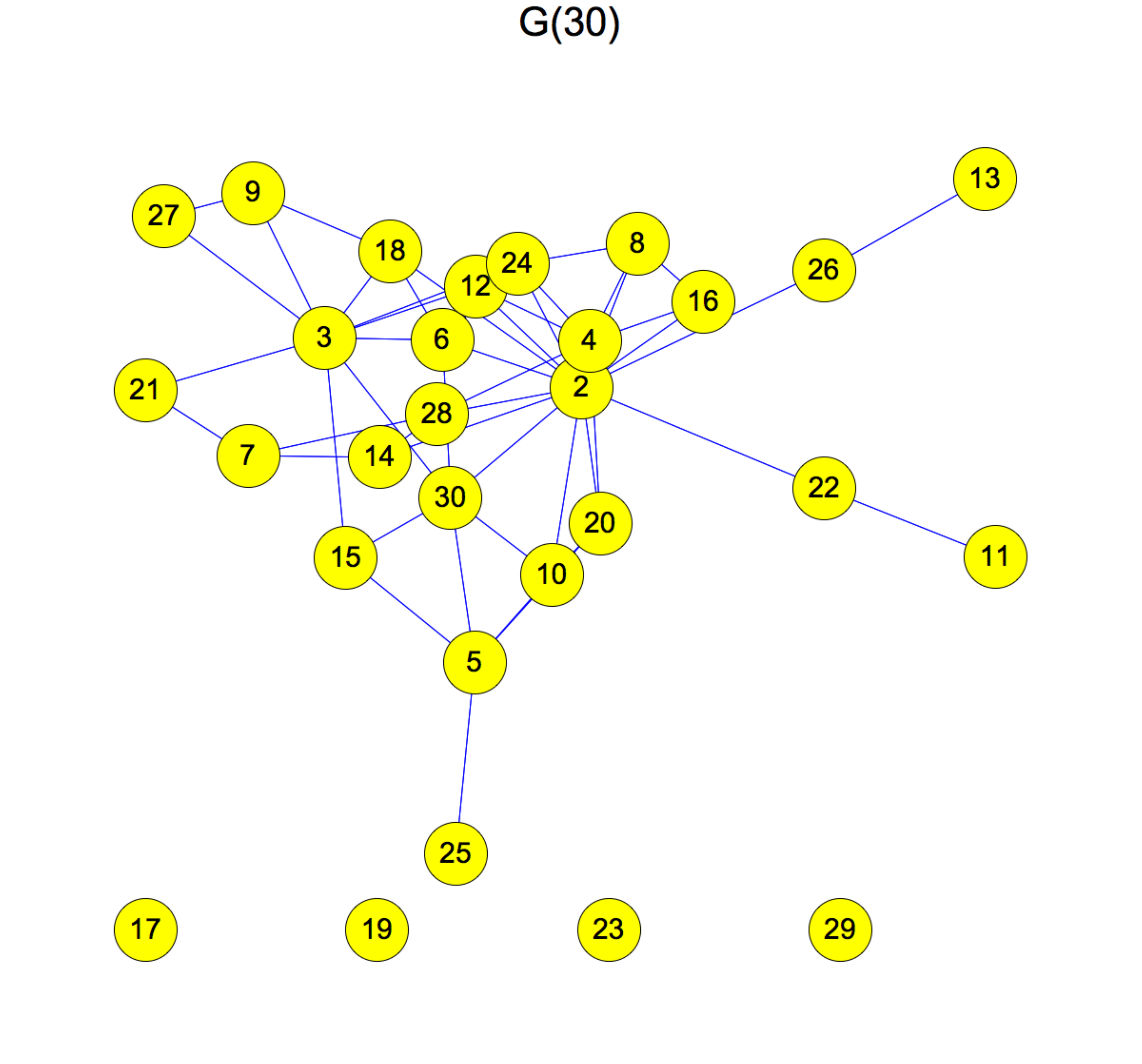}}
\scalebox{0.1}{\includegraphics{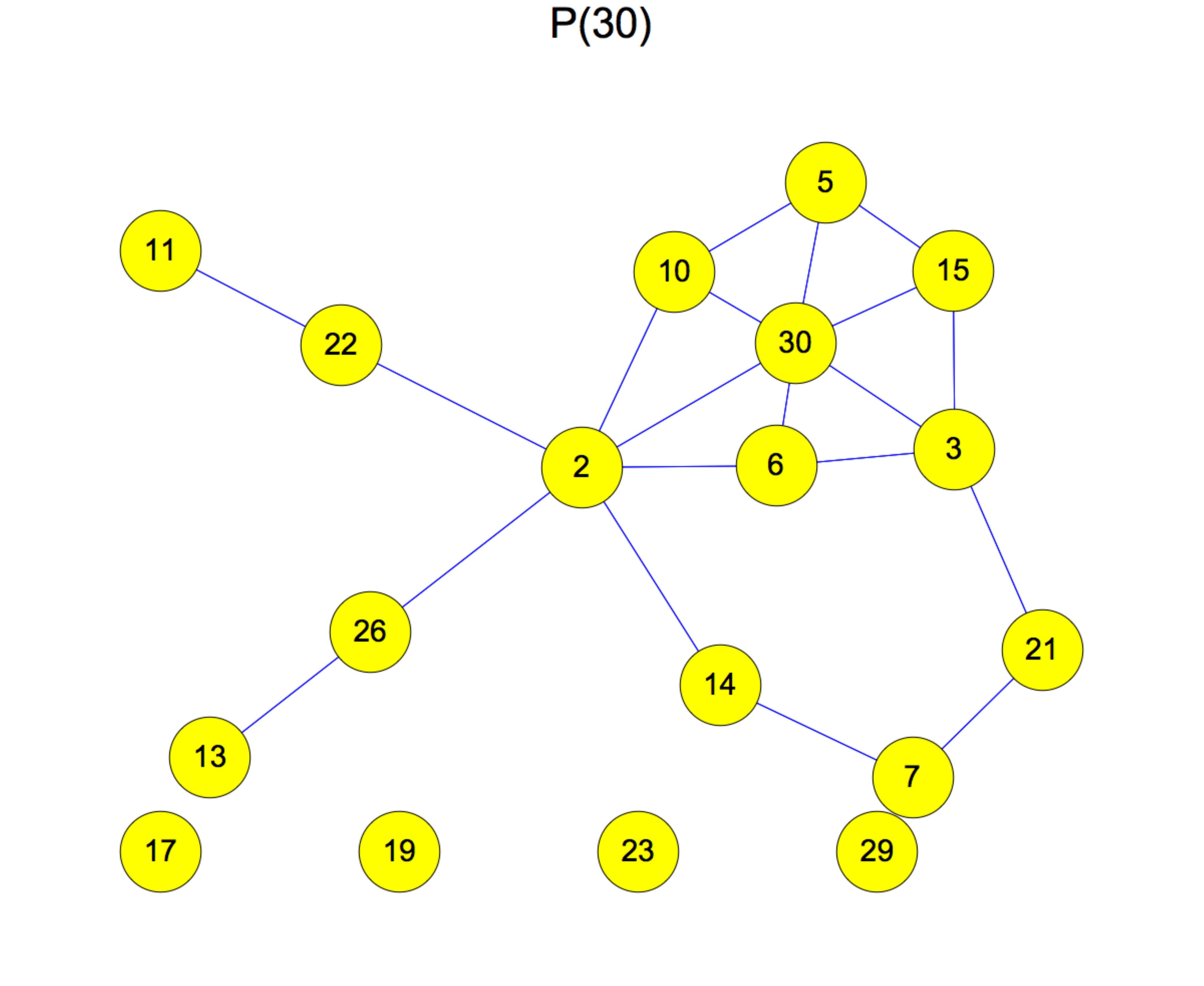}}
\caption{
The integer graph $G(30)$ contains irrelevant parts like the $3$-simplex $2,4,8,16$
and is the unit ball $B(1)$ of $1$. As a ball it always has Euler characteristic $1$. 
Removing $1$ gives the unit sphere $S(1)$ which after
removing the homotopically irrelevant parts gives the 
prime graph $P(30)$ which is a Morse cut of the Morse function
$f(x)=x$ on the Barycentric refinement $P$ of the complete graph on the spectrum of the
ring of integers $Z$. It is homotopic to $S(G(n))$ and has 
shifted cohomology and satisfies
$b_0(P(n)) = H^0(P(n)) = \pi(n)$, the prime counting function. 
The 2-disk $B(3)$ in $P(30)$ has been added when the point 
$2 \cdot 3 \cdot 5$ was joined. It decreased the Betti number $b_1$ by $1$.
The prime graphs $P(n)$ are not only simpler, they are also more naturally defined
as the Barycentric refinement of the spectrum of the commutative ring $Z$ 
rather than a semi ring $N$ like the integer graphs $G(n)$. The fact that the
cohomologies agree is a special case of the fact that Morse cohomology is equivalent
to graph cohomology. }
\label{figure1}
\end{figure}


\begin{figure}[!htpb]
\scalebox{0.14}{\includegraphics{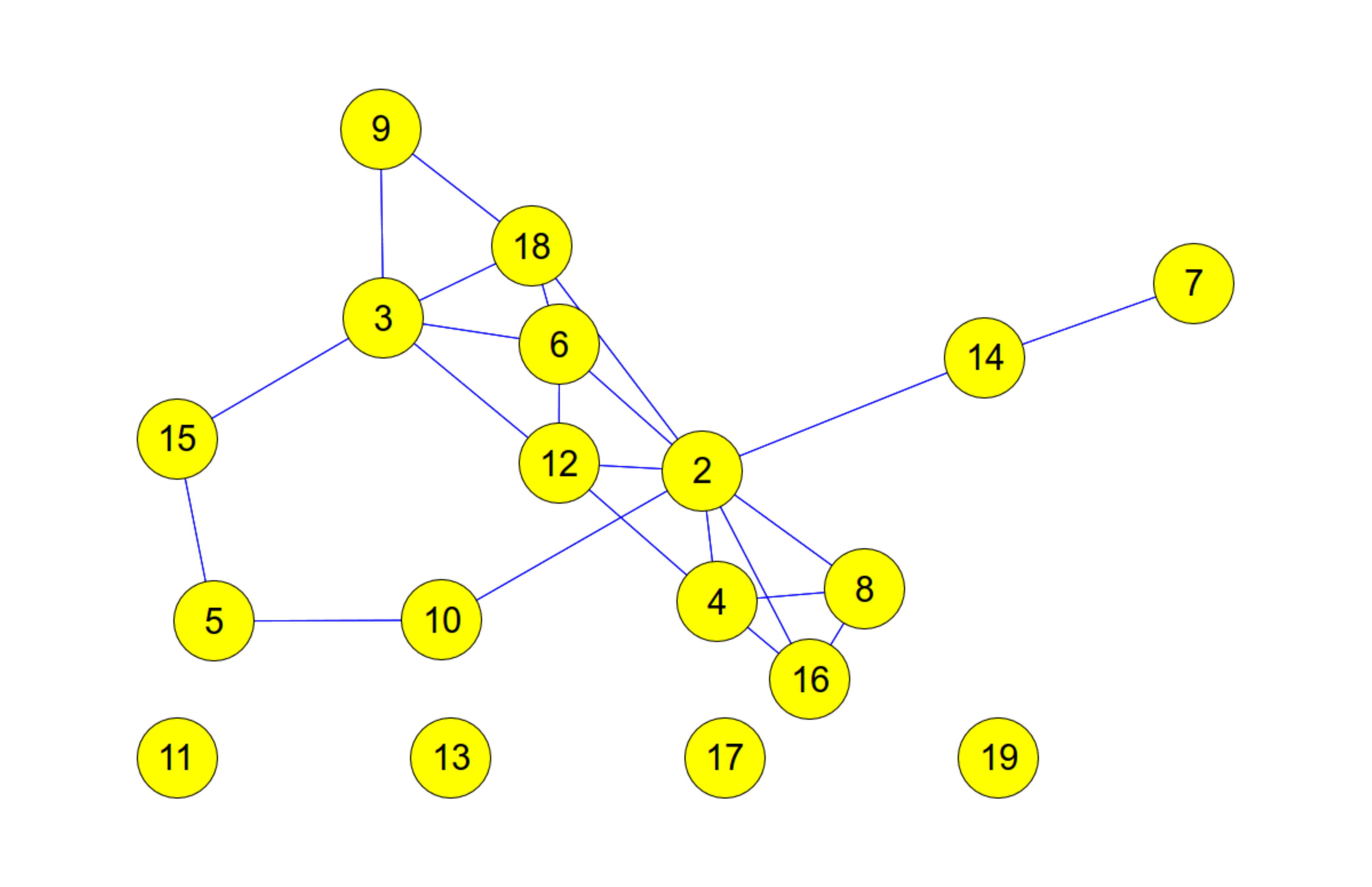}}
\scalebox{0.14}{\includegraphics{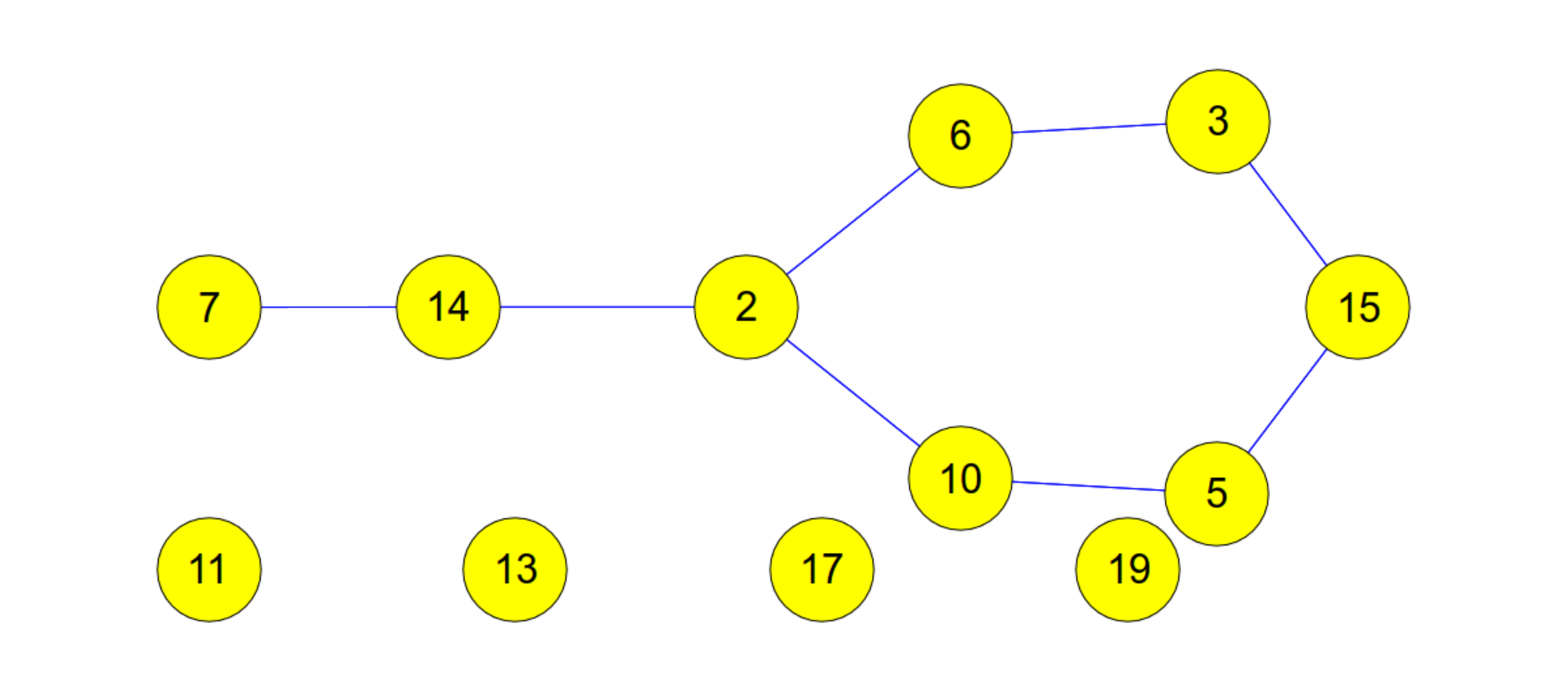}}
\caption{
In this figure we see first 
the integer graph containing all integers $2, \dots, 19$
as vertex set. As before, the unit $1$ is removed
so that we only see the unit sphere of $1$. 
The graph is homotopic to the prime graph $G(19)$ seen to the right.
which only contains the square free integers. These integers 
are critical points of the counting function $f(x)=x$ which 
has the M\"obius function $\mu(x)$ as Poincar\'e-Hopf index. 
The fact that the M\"obius function is zero on integers containing
a square of a prime reflects the fact that when adding such a vertex
nothing interesting happens topologically: adding such a vertex
is a homotopy deformation similarly as in topology where a change of
the value $c$ of a Morse function does not change the homotopy of 
$\{ f \leq c \}$. 
}
\label{figure2}
\end{figure}


\begin{figure}[!htpb]
\scalebox{0.14}{\includegraphics{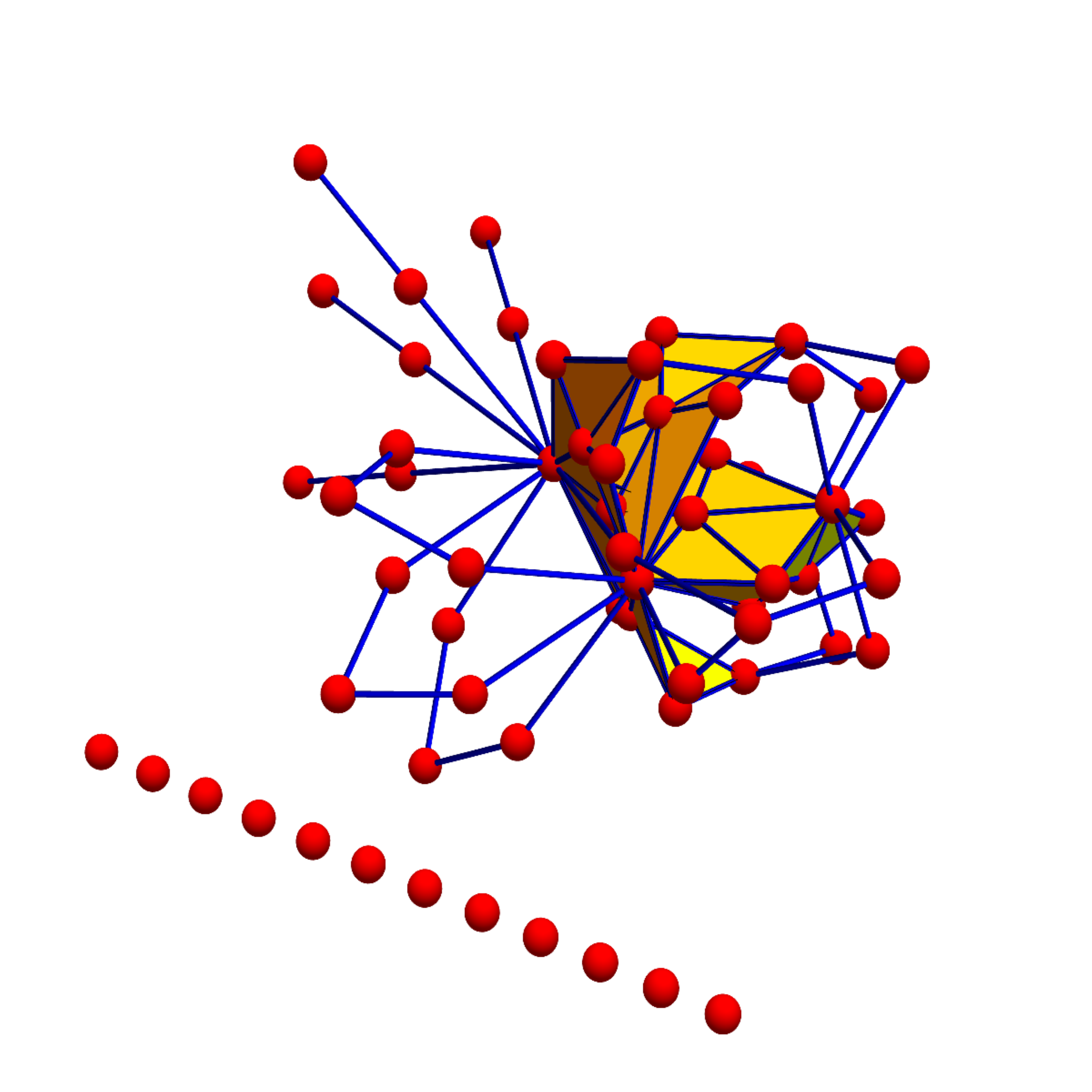}}
\scalebox{0.14}{\includegraphics{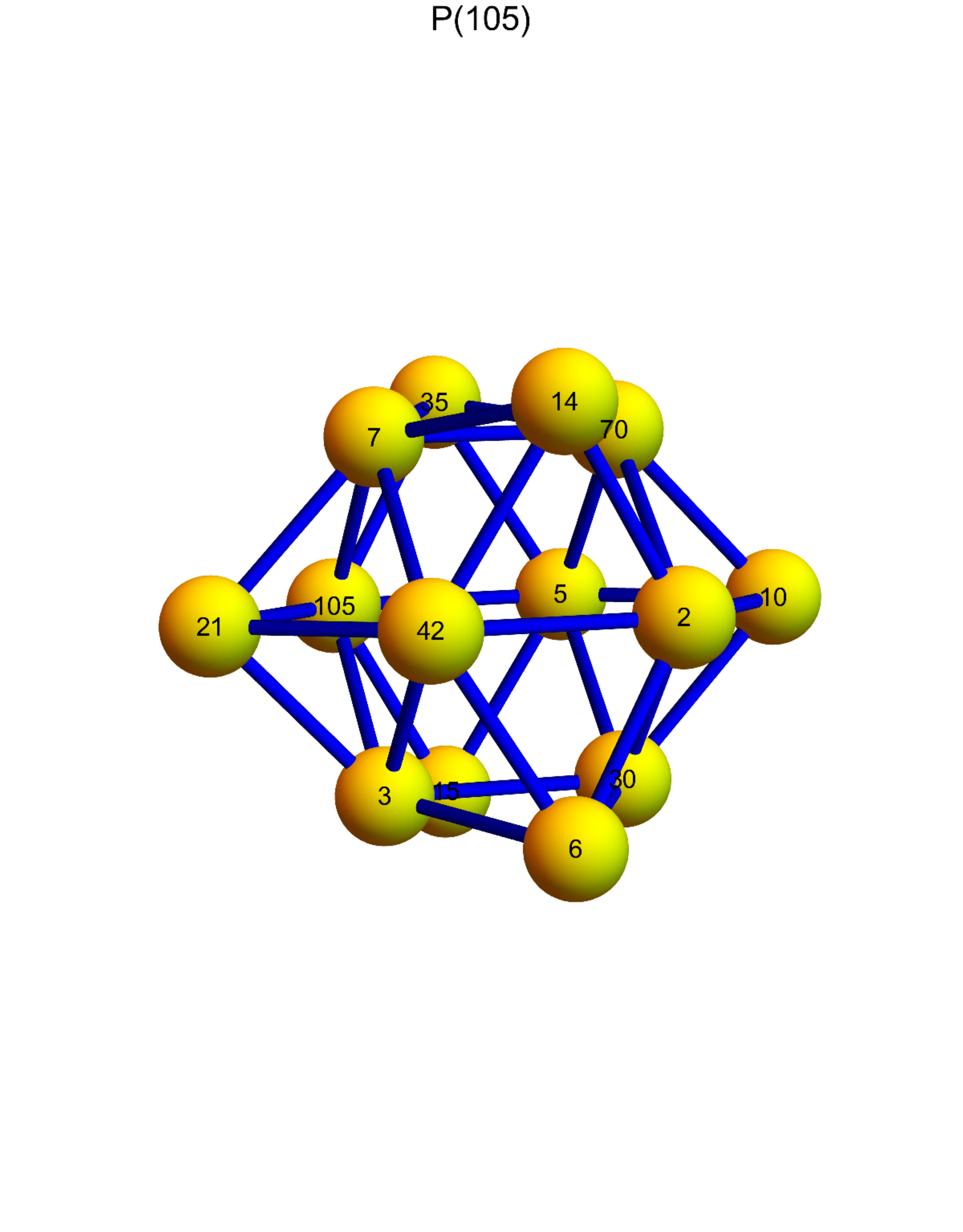}}
\caption{
The prime graph $G(105)$ is the smallest $G(n)$ containing a 
$2$-sphere contributing to a positive $b_2$. Its sphere is $S(210)$ in $G(210)$
is seen to the right and is defined by the 4 primes $2,3,5,7$. 
It is a {\bf stellated cube}. The number $105$ is 
the smallest integer for which we have a vertex with Morse index $m(x)=2$. 
This sphere stops to matter topologically again when the point $210=2 \cdot 3 \cdot 5 \cdot 7$ 
is added, which produces a central vertex, adding a 3-dimensional 
handle. For any quadruple $p_1<p_2<p_3,p_4$ of primes, the sphere
appears for $x=p_2 p_3 p_4$ and disappears at $x=p_1 p_2 p_3 p_4$. 
It illustrates how the Morse counting function $c_k(x)$  are
number theoretical. }
\end{figure}


\begin{figure}[!htpb]
\scalebox{0.14}{\includegraphics{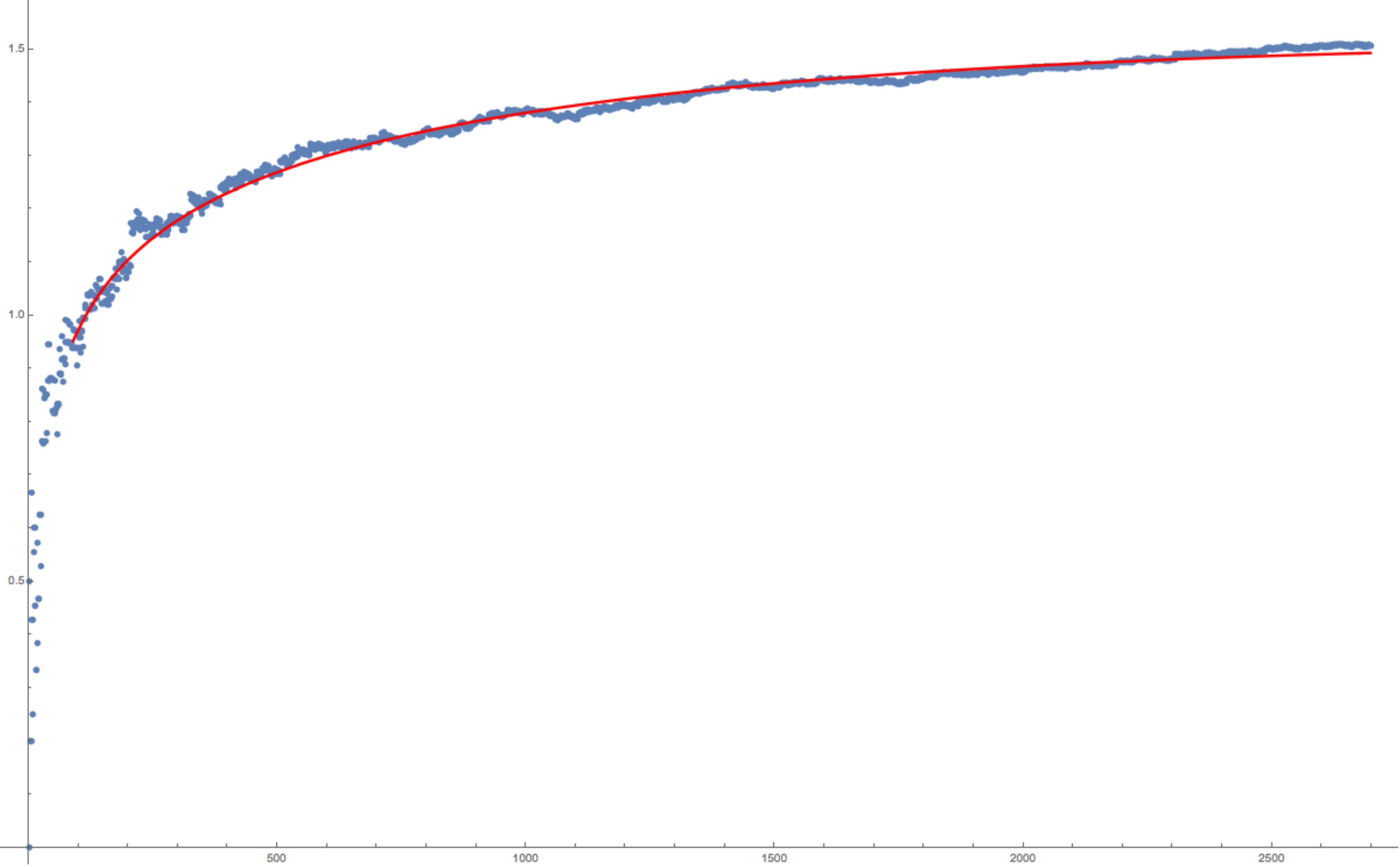}}
\caption{
The dimension of a graph is inductively defined as
${\rm dim}(G) = 1 + \sum_{x \in V} {\rm dim}(S(x))/|V|$. 
We see the graph of numerically computed dimensions of $G(n)$ 
for $6 \leq n  \leq 2690$. The best fit with the functions $1,x,\log(x)$ 
is $0.0764206 - 0.0000483082 x + 0.195795 \log(x)$. 
The growth of the dimension is related to the growth rate
of primes but the link has not yet been established.
}
\end{figure}


\begin{figure}[!htpb]
\scalebox{0.14}{\includegraphics{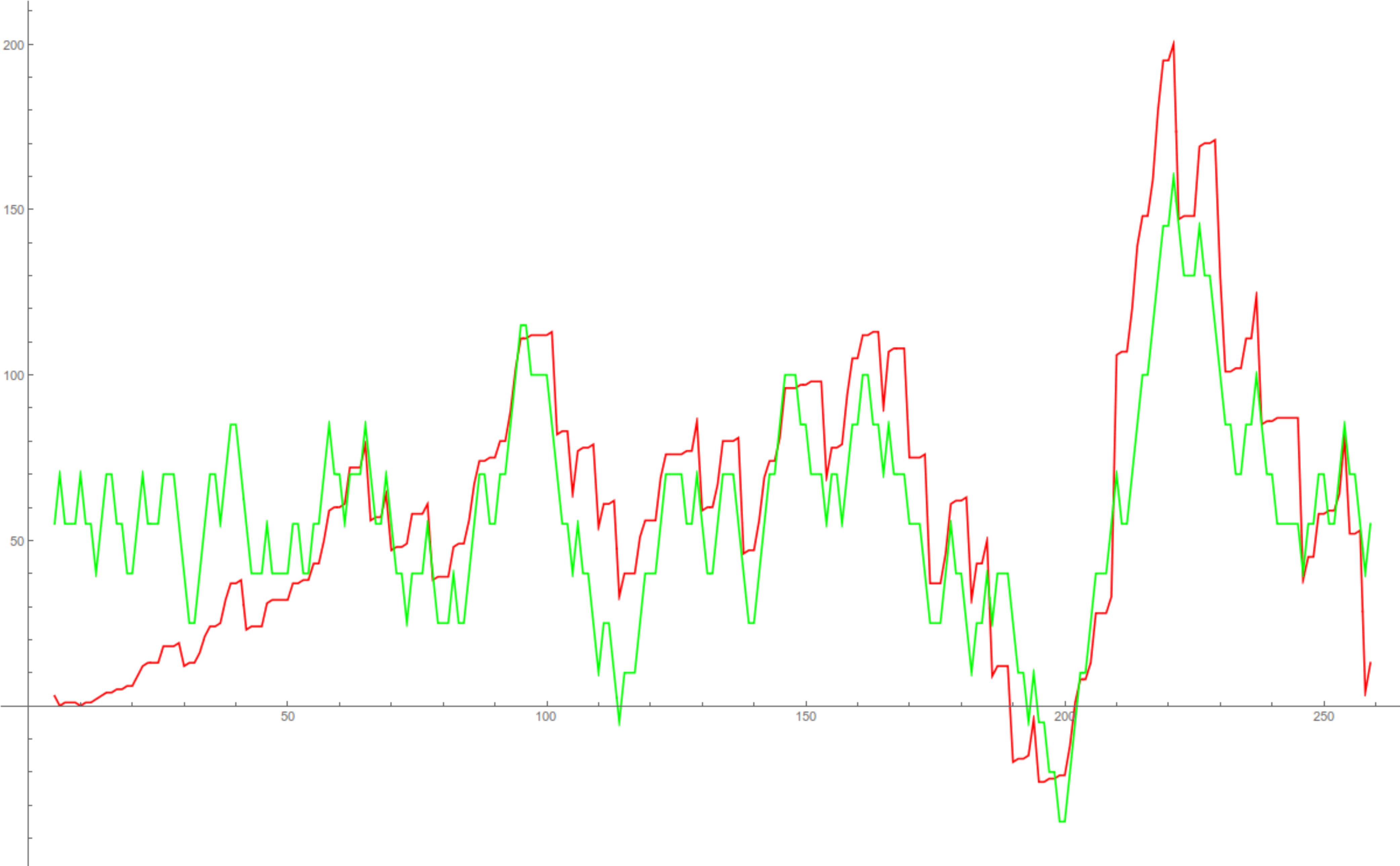}}
\caption{
The Wu characteristic $\omega(G)$ and an affinely scaled Euler characteristic 
$100-15*\chi(G)$ of the prime graphs $G(n)$ for $n \leq 259$. 
We initially constructed the prime graphs $G(n)$ as example graphs to 
investigate the Wu characteristic. 
}
\end{figure}

\section{More Remarks} 

\myparagraph{}
$b_0$ increases if $x=p$ is a prime and decreases if it is connected 
to the main connected component which happens if $x=2p$.  
$b_1$ increases if $x=p_2 p_3$ for $p_1<p_2<p_3$ and decreases if $x=p_1 p_2 p_3$. 
$b_2$ increases if $x=p_2 p_3 p_4$ for $p_1<p_2<p_3<p_4$ and decreases if 
$x=p_1 p_2 p_3 p_4$. 
The Betti number $b_3$ increases for $x=p_2 p_3 p_4 p_5$ and decreases again
if $x=p_1 p_2 p_3 p_4 p_5$. 

\myparagraph{}
We have seen that the natural numbers $N$ define a geometric object $P$
for which the asymptotic of the Mertens function provides a 
{\bf limiting Euler characteristic} of a Morse filtration. 
This limit is not a number but an asymptotic. Famously and popularized since 
more than a century, knowing the limiting growth rate of $M(n)$ would settle the 
{\bf Riemann hypothesis} RH. We see now that RH has a topological component. 
A pioneer of studying this question, Franz Mertens, was also the 
calculus teacher of Erwin Schroedinger. It was probably realized long 
before Mertens that an estimate $M(n) \leq n^{1/2+\epsilon}$ 
for all $n$ would imply $1/\zeta(s)$ to be analytic in $Re(s)>1/2+\epsilon$. 
as the Riemann functional equation implies then the roots of $\zeta$ are then
away from the strip $[1/2-\epsilon,1/2+\epsilon]$. 

\myparagraph{}
Feller \cite{Feller} illustrated the popular Mertens reformulation of the RH in a probabilistic
way: if the M\"obius function $\mu(n)$ were sufficiently {\bf random},
then by the {\bf law of iterated logarithms} the estimate 
$M(n) \leq \sqrt{n \log \log(n)}$ would hold. Already Stieljes, in 1885, showed interest 
in the Mertens function in relation to the Riemann hypothesis. 
The probabilistic intuition supports the believe that RH is reasonable 
and also explains why the initial conjecture of Mertens (which was already 
conjectured in a letter from Stieljes to Hermite) that $M(n) \leq \sqrt{n}$ is
too strong. Indeed, Odlyzko and te Riele \cite{OdlyzkoRiele} disproved it thirty years ago. While it is
still not known whether $M(n)/\sqrt{n}$ is bounded, the probabilistic heuristic
of Feller of the iterated log theorem makes it likely that $M(n)/\sqrt{n}$ indeed 
will grow like $\log(\log(n))$. According to \cite{OdlyzkoRiele} also a conjecture by Good and Churchhouse 
asking whether $\limsup M(n) (x \log(\log(x)))^{-1/2} = \sqrt{12}/\pi$ appears too optimistic. 

\myparagraph{}
The free Riemann gas or {\bf primon gas} model \cite{Spector} gives to a vertex $p$ in the
set of primes the energy $\log(p)$. Since the energy of a simplex $n=p_1\dots p_k$
is $E(n) = \log(n)$, it defines an energy on the Barycentric refinement, which in
this model leads to the {\bf second quantized Hamiltonian}. It is the Pauli inclusion principle
which leads to the graph $G$. The function $\mu(n) = (-1)^k$ is the Fermion operator.
It is positive for bosons, negative for fermions. While
without exclusion principle, the {\bf partition function} of
the model at {\bf inverse temperature} $s=1/T$ is
$\sum_n e^{-E(n)/T} = \sum_n e^{-\log(n) s} =  \sum_n 1/n^s=\zeta(s)$,
the partition function with Fermion statistics is the Dirichlet character
$\sum_n \mu(n) e^{-\log(n) s} = \sum_n \mu(n)/n^s = 1/\zeta(s)$.
In this model, the Riemann hypothesis is a statement about the partition function.
The pole at $s=1$ is the {\bf Hagedorn temperature}.

\myparagraph{}
In the continuum, there are estimates of Gromov 
\cite{Gromov81} of the type $b_k \leq C^{1+\kappa D}$ 
where $D$ is the diameter $-\kappa$ the minimal sectional curvature of the manifold.
For graphs $G(n)$ we indeed have an estimate $\kappa D \leq \log(n)/\log\log(n)$ which
is better than the Gromov estimate but still not strong enough to prove RH. 
Constants are important and as expected, topology alone is hardly the
silver bullet. Still, topology motivates to look at discrete 
differential geometric notions like {\bf curvature} and its connection to Betti numbers
in general. Indeed it provokes the question whether in full generality, for any finite
simple graph, a Gromov type estimate $b_k(G) \leq e^{c_k \kappa(G) d(G)}$
holds for finite simple graphs, where kappa is the minimal sectional curvature 
and $d(G)$ the diameter of the graph. 

\myparagraph{}
We first we have to define the {\bf minimal sectional curvature} $\kappa$ for a finite simple graph. 
Let $\kappa$ be the maximum for which the sectional curvature 
satisfies $1-{\rm deg}_W(x)/6 \geq -\kappa$ for all vertices $x$ and {\bf wheel sub graphs} $W$. 
For $G(n)$, the curvature is bounded by $1-\nu(n)/3$, where $\nu(n)$ is the maximal 
number of distinct prime factors of $n$. We know $\nu_n \leq \log(n)/\log\log(n)$.

\myparagraph{}
The {\bf diameter} of the graph $G(n)$ is bounded above by $5$. Proof: take two vertices
$k,m$. If both are not prime they both have a smaller factor $p,q$. The path
$k,p,2q,2,2q,q,m$ shows that $d(k,m) \leq 6$ because all numbers $p,2q,2,2q,q$ are 
smaller or equal than $n$ so that this path is in $G(n)$. The distance is
even $\leq 4$ then as if $p$ is the smallest prime factor of $k$ and $q$ the smallest of $m$, then 
$pq \leq \sqrt{k m} \leq max(m,n)$ so that $k,p,pq,q,m$ is in $G(n)$ and $d(k,m) \leq 4$.
If $k,m$ have a common prime factor $p$, then their distance is even 
$2$ as the path $k,p,m$ shows. 
If $k=p$ is prime but connected to the main component, then there is a point 
$p l \leq n$ so that $2p \leq n$. The path $p,2p,2$ is in $G(n)$ 
meaning $d(p,2) \leq 2$. We see that for any integer $m$, $d(m,2) \leq 3$. This shows that the
distance in that case is $\leq 5$ with connection $p,2p,2,2q,q,m$ of length $m$.
If both $n,m$ are prime their distance is $\leq 4$.
All graphs $G(n)$ with $n \geq 15$ have diameter $5$ where the maximal
distance is obtained between a prime $p$ and a composite number $m=q r$ with
shortest connection $p,2p,2,2q,q,m$ which can not be shortened.
For the graph $G(15)$ it is the distance between $11$ and $15$ as $11,22,2,10,5,15$ 
and $11,22,2,6,3,15$ are the two shortest paths from $11$ to $15$. 

\myparagraph{}
In our case, the Gromov formula gives a bound on the Betti numbers 
$C^{ 5 \log(n)/\log \log(n) }$ which is only polynomial in $n$. 
Using the simplest Morse inequality $b_k(G(n)) \leq c_k(G(n))$ 
we can see better as $c_k$ grows like the number $\pi_k(n)$ telling how
many prime $k+1$-tuples $p_0<p_1<p_2< \dots < p_k \leq n$ there are. 
Neglecting constants we have $c_k \leq n^k/( \log(n)^k k!)$. 
The Morse inequalities now gives us information about the alternating
sum $\sum_k (-1)^k c_k$ in terms of the growth of the Mertens
function. We see this here reflected a relation 
between the prime number distribution and the growth of the Mertens function
related to the Euler {\bf golden key} 
$\sum_n \mu(n)/n^s = 1/\zeta(s) = \prod_k ( 1 - 1/p_k^s)$.


\myparagraph{}
We initially have used the graphs $G(n)$ as a play ground to investigate the 
{\bf Wu characteristic} $\sum_{x \sim y} (-1)^{ {\rm dim}(x) + {\rm dim}(y)}$
summing over all pairs of interacting simplices $x,y$ (see \cite{valuation}). 
This number is like Euler characteristic a functional on graphs which is 
multiplicative and satisfies Gauss-Bonnet or Poincar\'e-Hopf formulas.
It is natural to ask whether the growth rate of $\omega(G(n))$ has any 
relation to analytic properties of some zeta function.

\myparagraph{}
Geometry enters basic counting principles.
Geometric arrangements of pebbles on rectangular patterns motivated 
on a fundamental level to accept commutativity laws 
$n \times m = m \times n$ despite that on a fundamental level, nature does
not multiply commutatively as the Heisenberg anti-commutation relation $pq-qp=\hbar$ shows.
Neglecting square parts is not only important in combinatorics
it is also the reason for fundamental principles like {\bf Pauli exclusion}.
In our geometric picture, adding numbers with square factors is a homotopy which does not
change Euler characteristic. It is the Fermionic nature of calculus which fundamentally 
evaluates on oriented substructures and changes sign when the orientation is
switched. With a number like 12, switching two prime factors 2 would change the
sign but not the number. It is the interpretation of numbers as simplices in a 
simplicial complex which makes this clear. 

\myparagraph{}
Our understanding of what a "point" is has changed over time: 
while the first mathematicians saw points as marks or pebbles without formal
definition, it was Euclid who first saw the need to define it. He decided to 
declare a points to be something "that which has no part".
Descartes started to access points through algebra. This has been
evolved more and more since. Points are now seen as maximal 
ideals in a commutative ring. In analysis, the Gelfand representation theorem 
identifies points of a $C^*$ algebra as elements in the spectrum. 
The Nullstellensatz identifies points of an affine variety 
as maximal ideals of the ring its regular functions.
While already Kummer and Noether saw prime ideals as generalized points,
irreducible varieties like an elliptic curve $y^2=x^2+x+1$ are now seen
as "points" in a "spectrum" on which Zariski built a topology in which 
closed sets are the set of prime ideals which contain a fixed prime ideal.
Grothendieck took the picture that prime ideals are points more seriously
even so he considered the notion of a "scheme" as a setup of 
"infantile simplicity". But it is the simplicity which makes it appealing.
The ring of integers as the simplest scheme 
has been studied since the very beginning of mathematics.

\myparagraph{}
An partial order structure given by an abstract simplicial complex can not 
be visualized well. Fortunately, after a Barycentric subdivision, any 
abstract finite simplicial complex is the Whitney complex of a finite simple
graph. Graphs are more intuitive as they can be drawn in such a way that the
vertices and edges alone determine all the topological 
information of the original simplicial complex. Its cohomology or Euler characteristic 
for example is the same. The graph category also comes naturally with analytic 
structures like incidence matrices and Laplacians. However, restricting to 
graphs is merely language as the Barycentric refinement of any simplicial
complex is already the Whitney complex of a graph. It is useful language
however as it is a basic data structure known to many computer languages. 

\myparagraph{}
For a ring in which the unique factorization fails, the Morse condition 
can fail. A case, where things still work is the ring of Gaussian integers. 
There is a canonical counting function as we can only primes in equivalence classes. 
This means, we look at primes in the quotient $\mathbb{C}/D_4$,
where $D_4$ is the dihedral group generated by the units in $\mathbb{C}$ and conjugation.
A basic question is for which factorization domains we get a Morse picture? 
For fields, the graph is empty. The simplest examples with non-trivial graph 
are the rings $\mathbb{Z}/\mathbb{Z}_n$ with non-prime $n$. 

\myparagraph{}
Morse theory is related but stronger than Poincar\'e-Hopf. One can see
this well when viewing the Poincar\'e-Hopf formula as a special
case of the Morse inequalities. In fixed point
theorems, Morse theory gives better bounds and more sophisticated
theories like Floer cohomology have demonstrated this.
While one get to the sum of Betti numbers from the weak Morse inequalities,
The Lefschetz theorems for an automorphism $T$ only gives the 
alternating sums of the Betti numbers. 
By the way, in the discrete the Lefschetz fixed point theorem can be proven quickly from the heat flow:
the {\bf Lefschetz number} is the super trace $\chi_T(G)$ of the induced map $U_T$ on $H^p(G)$.
The theorem tells that it is equal to $\sum_{T(x)=x} i_T(x)$, where $i_T(x)=(-1)^{{\rm dim}(x)} {\rm sign}(T|x)$
is the {\bf Brouwer index}. For $T=Id$, the Lefschetz formula is {\bf Euler-Poincar\'e}.
With the {\bf Dirac operator} $D=d+d^*$ and {\bf Laplacian} $L=D^2$, discrete {\bf Hodge} tells that
$b_p(G)$ is the nullity of $L$ restricted to $p$-forms.
By {\bf McKean Singer super symmetry}, the positive Laplace spectrum on even-forms is the
positive Laplace spectrum
on odd-forms. The super trace ${\rm str}(L^k)$ is therefore zero for $k>0$ and $l(t)={\rm str}(\exp(-tL) U_T)$
with {\bf Koopman operator} $U_Tf=f(T)$ is $t$-invariant. Lefschetz follows because
$l(0) = {\rm str}(U_T)$ is $\sum_{T(x)=x} i_T(x)$ and  $\lim_{t \to \infty} l(t)=\chi_T(G)$  by Hodge.

\myparagraph{}
We have seen a more poetic rather than useful story
about counting. It relates two seemingly unrelated topics: the
{\bf arithmetic of integers} with its enigma of the structure of primes
as well as {\bf Morse theory} in {\bf combinatorial topology} which has
especially in conjuncture of cohomology theories and semi-classical analysis
tools become a heavy machinery.
Morse theory initially was built to study topological spaces and been
developed in a differential topological frame works.
There are various approaches possible to discrete Morse theory. One theory
was built by Forman \cite{Forman1999} and the description here is motivated by that.
Especially important is the idea to look at simplices as "generalized points".
The idea to extend objects has been frequent in mathematics like extending
number systems, the use of generalized functions or the shift from maximal ideals to
prime ideals. In the context of simplicial complexes it is very natural and
Barycentric refinement shows that the original simplices are then vertices.

\myparagraph{}

Here are some mathematical results.
Let $G(n)$ have the square free integers $\geq 2$ as vertices.
Two integers are connected if one divides the other. 

\begin{propo}
The diameter of the connected component of $P(n)$ is $\leq 5$ 
for all $n$. 
\end{propo}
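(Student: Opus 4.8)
The plan is to prove the bound by a direct case analysis on a pair of vertices $k,m$ in the connected component, routing every connecting path either through a shared prime factor or through the universal hub vertex $2$, and controlling the intermediate vertices with the elementary estimate that a composite number's smallest prime factor is at most its square root. The role of the hypothesis ``connected component'' is to supply, for each prime vertex $p$ present, its even multiple $2p$: a prime $p$ is an isolated vertex exactly when $2p>n$, so membership in the connected component forces $2p\le n$, making $2p$ a square-free vertex adjacent to both $p$ and $2$. I would first record this remark, so that the hub $2$ is reachable in at most two steps from any prime vertex.

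Next I would dispose of the cases not involving a prime endpoint, treating a common prime factor first so that the later cases may assume none. If $k$ and $m$ have a common prime factor $p$, then $k,p,m$ is a path and $d(k,m)\le 2$. If instead $k$ and $m$ are both composite (having excluded a common factor, so their smallest prime factors $p,q$ are distinct), then $p\le\sqrt{k}$ and $q\le\sqrt{m}$, whence $pq\le\sqrt{km}\le\max(k,m)\le n$; as $pq$ is a product of distinct primes it is a square-free vertex, so $k,p,pq,q,m$ is a path and $d(k,m)\le 4$.

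The distance $5$ is attained only in the mixed case, which I would treat last. For two distinct primes $p,p'$ in the component, $2p,2p'\le n$ and $p,2p,2,2p',p'$ has length $4$. If $k=p$ is an odd prime, $m$ is an odd composite, and $p\nmid m$, let $q$ be the smallest prime factor of $m$; then $q\le\sqrt{m}\le\sqrt{n}$ gives $2q\le n$, and $p,2p,2,2q,q,m$ is a path of length $5$ (this is exactly the connection $11,22,2,6,3,15$ in $G(15)$). The degenerate possibilities---$p=2$, or $m$ even so that $q=2$ and $2\mid m$---only shorten the path through the hub, so in every case $d(k,m)\le 5$.

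The argument carries no serious obstacle; its only content is the bookkeeping that every intermediate vertex $2p$, $2q$, $pq$ is genuinely a vertex of $P(n)$, namely square-free and $\le n$. Square-freeness is automatic since each such vertex is a product of distinct primes, and the size constraints reduce to the two facts used above: $2p\le n$ from connectedness and $2q\le n$ (equivalently $pq\le n$) from the smallest-prime-factor estimate. The one point requiring care is to organize the degenerate sub-cases ($p=2$ or $m$ even) so that the hub route through $2$ is never longer than the generic length-$5$ path.
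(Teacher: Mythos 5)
Your proof is correct and follows essentially the same route as the paper's: both arguments use the vertex $2$ as a hub reachable in at most two steps from any connected prime (via $2p\le n$), bound the composite--composite distance by $4$ through the smallest prime factors using $pq\le\sqrt{km}\le\max(k,m)\le n$, and obtain $5$ only in the mixed prime--composite case. If anything, your write-up is slightly more complete, since the paper's own proof of the proposition leaves the mixed case (and the common-factor and degenerate even sub-cases) implicit, relying on the triangle inequality through the hub $2$.
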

\begin{proof} 
The distance of any composite number $k=p q$ to $2$ is $\leq 3$ as 
the path $k,p,2p,2$ shows.
If $p$ is a prime connected to the main component, then 
there exists $q$ with $pq \leq n$ and so $2p \leq n$ and the path $p,2p,2$
shows $d(p,2)\leq 2$. If two integers $k,m$ are both
composite with $k=pq, m = rs$, then pick the smaller factors $p,r$ of
both and the distance between $k,m$ is $\leq 4$ with path  $k,p,pr,r,m$. 
If $k,m$ are both prime connected to the main component then their
distance is $\leq 4$ as $k,2k,2,2m,m$ shows. 
\end{proof}

This should be true for any $R$ be an integral domain with a 
non-trivial absolute value function $| \cdot |$. 
The next lemma tells that $f$ is a Morse function. What actually happens is that 
every sphere $S_f^-(x)$ is of a Hopf fibration type in the sense
they are the union of complementary solid tori glued  along a torus. 
Here is the key result which proves that $1-\chi(S^-(x)))$ is always either $-1$ or $1$,
depending on whether $\mu(x)=-1$ or $1$. 

\begin{propo}
For every vertex $x$ in $G$, the graph $S=S^-_f(x)$ is an Evako 
$m$-sphere for some $m \geq -1$.  Consequently, $1-\chi(S^-(x))$ is $\mu(x)$. 
\end{propo}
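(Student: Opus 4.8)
The plan is to identify $S=S^-_f(x)$ explicitly and recognise it as the Barycentric refinement of the boundary of a simplex. Write $x=p_1 p_2\cdots p_k$ as a product of $k$ distinct primes. A neighbour $y$ of $x$ in $G$ with $f(y)=y<x=f(x)$ cannot be a multiple of $x$, so it must be a proper divisor of $x$ with $y\geq 2$; since $x$ is square free, these proper divisors correspond bijectively to the nonempty proper subsets $A\subsetneq\{p_1,\dots,p_k\}$ via $A\mapsto\prod_{p\in A}p$, and two of them are joined by an edge exactly when one divides the other, i.e.\ when the subsets are comparable under inclusion. Hence $S$ is precisely the poset graph of nonempty proper subsets of a $k$-element set ordered by inclusion, which is the Barycentric refinement of the boundary complex of the $(k-1)$-simplex $K_k$ on the primes $p_1,\dots,p_k$. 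This first step is pure bookkeeping.

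Next I would show by induction on $k$ that this graph, call it $B_k^{\partial}$, is an Evako $(k-2)$-sphere. The base cases are immediate: for $k=1$ there are no proper nonempty subsets, so $S$ is the empty graph, the $(-1)$-sphere; for $k=2$ we get the two incomparable singletons $\{p_1\},\{p_2\}$, the $0$-sphere. For the inductive step I would compute the unit sphere of a vertex $A$ with $|A|=j$ inside $B_k^{\partial}$. Its neighbours split into the sets strictly below $A$ (the nonempty proper subsets of $A$, a copy of $B_j^{\partial}$) and the sets strictly between $A$ and the whole set (which, after deleting the common part $A$, are the nonempty proper subsets of the complement, a copy of $B_{k-j}^{\partial}$); moreover every lower neighbour $B_1\subsetneq A$ and every upper neighbour $B_2\supsetneq A$ satisfy $B_1\subsetneq B_2$ and are therefore adjacent. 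Thus the unit sphere is the graph join $B_j^{\partial}*B_{k-j}^{\partial}$. By the induction hypothesis these factors are a $(j-2)$-sphere and a $(k-j-2)$-sphere, and since the join of an $a$-sphere and a $b$-sphere is an $(a+b+1)$-sphere, every unit sphere of $B_k^{\partial}$ is a $(k-3)$-sphere. Hence $B_k^{\partial}$ is a $(k-2)$-graph, and together with the remaining clause of the sphere definition --- that removing one vertex leaves a contractible graph, which one verifies directly (deleting a singleton vertex $\{p_k\}$ leaves a collapsible graph; already for $k=3$ the hexagon minus a singleton is a path) --- it is a $(k-2)$-sphere. Equivalently, one may simply invoke that the boundary of a simplex is a sphere and that Barycentric refinement preserves the Evako sphere property, as developed in \cite{KnillJordan}.

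Finally, the Euler characteristic statement is routine once the sphere claim is in hand. An Evako $d$-sphere has $\chi=1+(-1)^{d}$, so with $d=k-2$ we get $\chi(S)=1+(-1)^{k}$ and therefore $1-\chi(S)=(-1)^{k+1}$. Since $x$ is a product of $k$ distinct primes we have $\mu(x)=(-1)^{k}$, so $1-\chi(S^-_f(x))=-\mu(x)$; this is exactly the Poincar\'e--Hopf index $i_f(x)$ recorded in the introduction, and the sign is the one that makes $\sum_x i_f(x)=\chi(G(n))=1-M(n)$ consistent (so the ``$\mu(x)$'' in the statement is $i_f(x)$ up to this overall sign convention).

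I expect the main obstacle to be the inductive sphere step, and within it the join-of-spheres lemma and the verification of the remove-a-vertex-contractibility clause in the strict Evako sense: identifying the graph and computing $\chi$ are both mechanical, but turning the observation ``unit spheres are joins of smaller spheres'' into a clean proof that $B_k^{\partial}$ satisfies the full recursive definition of a sphere is where the real content lies. It is cleanest if the join lemma and the stability of spheres under Barycentric refinement can be quoted from the established combinatorial framework rather than re-derived here.
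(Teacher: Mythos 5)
Your proof is correct, and it is in fact more complete than the paper's own. The paper's argument is a bare case enumeration: $S^-_f(x)$ is the empty graph for $x$ prime, the $0$-sphere $\{p,q\}$ for $x=pq$, the hexagon $C_6$ for $x=pqr$, and a stellated cube for $x=pqrs$ --- and then the proof literally breaks off mid-sentence (``In general, the sphere is a'') without ever supplying the general step. Your identification of $S^-_f(x)$ with the Barycentric refinement of the boundary of the $(k-1)$-simplex, followed by the induction in which the unit sphere of a vertex $A$ splits as the join of the part strictly below $A$ (a copy of $B_{|A|}^{\partial}$) and the part strictly above $A$ (a copy of $B_{k-|A|}^{\partial}$, after deleting the common part), is exactly the missing general argument; the join-of-spheres lemma and the stability of the Evako sphere property under Barycentric refinement can indeed be quoted from the framework of \cite{KnillJordan}, so that route is legitimate rather than circular, and your base cases reproduce the paper's pictures for $k\leq 4$. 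One further point in your favor: your sign computation $1-\chi(S^-_f(x)) = (-1)^{k+1} = -\mu(x)$ is right and exposes a slip in the statement itself --- the proposition asserts $1-\chi(S^-(x))$ equals $\mu(x)$, but the paper's own introduction and its Poincar\'e--Hopf bookkeeping ($\sum_x i_f(x) = \chi(G(n)) = 1-M(n)$) require $i_f(x)=1-\chi(S^-_f(x))=-\mu(x)$, exactly as you conclude; you were right not to force agreement with the stated sign.
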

\begin{proof}
If $x$ is prime, then $S^-_f(x)$ is the empty graph, the $(-1)$-sphere.
If $x=pq$ is a product of two primes, then $S^-_f(x) = (\{p,q\},\{\})$ is the $0$-sphere. 
If $x=pqr$ is a product of three primes, then $S^-f(x)$ is $C_6$ with vertices 
$p,pq,q,qr,r,rp$. If $x=pqrs$ is a product of four primes, then $S^-_f(x)$ 
is a stellated cube on which every unit sphere is either a $C_6$ graph (for the products
of three primes or primes) or then a $C_4$ graph for the products of 2 primes like
$pq$ for which the unit sphere is $p,pqr,q,pqs$. In general, the sphere is a 
\end{proof}

Also this should be true if the ring $Z$ is replaced by any unique factorization domain. 
For a general finite simple graph $G$ and a scalar function on the vertex set, we 
call $f$ a {\bf Morse function} if this property is satisfied. 
This result is related to the fact that if $G$ is an arbitrary finite simple graph and 
$f$ an arbitrary injective function on its vertex set producing an ordering and if $G_1$ 
is the Barycentric refinement $f_1$ is the function which assigns to a new vertex the place in which
a monoid appears in the Stanley-Reisner ring representation of the graph which produces
a function on the vertex set of $G_1$, then $i_f(x)$ is always either $-1$ or $1$ 
depending on the dimension of the sphere $S_g^-(x)$.

\begin{propo}
For any Morse function $f$ on a finite graph, the weak 
Morse inequalities $b_k \leq c_k$ hold.
\end{propo}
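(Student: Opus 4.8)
The plan is to prove the weak Morse inequalities $b_k \leq c_k$ by relating both quantities to the Morse filtration $G(n) = \{ f \leq n \}$ and tracking how the Betti numbers change as vertices are added one at a time. First I would order the vertices $x_1, x_2, \dots, x_N$ of $G$ by increasing value of $f$, so that $G(n)$ is obtained from $G(n-1)$ by adjoining the single vertex $x_n$ together with the edges connecting it to $S^-_f(x_n)$. By the preceding proposition, $S^-_f(x_n)$ is an Evako $(m-1)$-sphere, so attaching $x_n$ amounts to gluing in a $k$-ball $B^-_f(x_n)$ along its boundary sphere, where $k = m(x_n)$ is the Morse index. This is the discrete analogue of attaching a $k$-cell, and the key local fact is that such an attachment changes exactly one Betti number by $\pm 1$: either it creates a new $k$-dimensional cycle (raising $b_k$ by one) or it fills in an existing $(k-1)$-cycle (lowering $b_{k-1}$ by one).

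The central step is to establish this dichotomy rigorously. I would argue via the long exact sequence of the pair, or equivalently by examining the rank of the new row of the incidence matrix $d$ restricted to the forms involving $x_n$: since the boundary of the attached ball is a sphere $S^-_f(x_n)$ representing a class in $H_{k-1}(G(n-1))$, that class is either already a boundary (in which case attaching the ball kills a generator, decreasing $b_{k-1}$) or it is nontrivial (in which case the new top cell is a cycle not killed by anything, increasing $b_k$). In either case the index relation $i_f(x_n) = (-1)^{m(x_n)}$ is consistent with $\Delta\chi = (-1)^k$. Crucially, a vertex of Morse index $k$ can only ever change $b_k$ (upward) or $b_{k-1}$ (downward); it can never increase $b_j$ for $j \neq k$.

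With the local analysis in hand, the inequality follows by summation. Let $c_k = c_k(G)$ count vertices of Morse index $k$. Each such vertex either contributes $+1$ to $b_k$ at the moment it is added or contributes $-1$ to $b_{k-1}$; write $c_k = a_k + e_k$, where $a_k$ is the number of index-$k$ vertices that raise $b_k$ and $e_k$ the number that lower $b_{k-1}$. Then the final Betti number satisfies
\[
b_k = a_k - e_{k+1} \leq a_k \leq a_k + e_k = c_k,
\]
since $e_{k+1} \geq 0$ and $e_k \geq 0$. This gives $b_k \leq c_k$ directly. The same bookkeeping, when expressed through the polynomial identity $c(t) - b(t) = (1+t)R(t)$ with $R(t) = \sum_k e_{k+1} t^k$ having nonnegative coefficients $r_k = e_{k+1} \geq 0$, simultaneously yields the strong inequalities alluded to in the excerpt.

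I expect the main obstacle to be the rigorous justification of the local rank-one dichotomy in the purely combinatorial Evako setting: one must verify that gluing a vertex along an Evako sphere $S^-_f(x_n)$ genuinely behaves like a cell attachment at the level of the simplicial chain complex, so that precisely one Betti number shifts by one. In the continuum this is standard cellular homology, but here it requires knowing that the homology of $G(n)$ computed from the Whitney complex responds to the handle attachment exactly as the homotopy-theoretic picture predicts, and that the boundary sphere's class in $H_{k-1}(G(n-1))$ controls the outcome. Everything after this local step is elementary nonnegativity bookkeeping.
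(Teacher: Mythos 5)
Your proposal is correct, and its skeleton---ordering the vertices by $f$, building the Morse filtration $G(n)$, and tracking how Betti numbers move as each handle is attached---is the same as the paper's. But your local analysis is genuinely sharper, and it repairs an imprecision in the paper's three-line argument. The paper asserts that at every critical point of degree $m$ the $m$'th Betti number changes, which makes $b_m \le c_m$ immediate (a net sum of $\pm 1$'s is bounded by the number of summands); however, under the paper's official definition of Morse index as $\dim B^-_f(x)$, that assertion is false: adding $x=2p$ to the prime graph attaches a $1$-ball (index $1$) yet changes $b_0$, not $b_1$. The paper's claim is coherent only under its de facto relabelling, visible in $c_0(x)=\pi(x)+\pi(x/2)$, where $c_m$ effectively counts the events at which $b_m$ moves. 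You instead keep the standard index and prove the correct dichotomy---an index-$k$ attachment either raises $b_k$ by one or lowers $b_{k-1}$ by one---and then close with the bookkeeping $b_k = a_k - e_{k+1} \le a_k + e_k = c_k$. This costs you one nontrivial lemma (the cell-attachment dichotomy), which you rightly flag as the crux; it does hold here, by Mayer-Vietoris (with reduced homology) applied to $G(n) = G(n-1)\cup B^-_f(x_n)$, whose intersection is the Evako sphere $S^-_f(x_n)$ and where $B^-_f(x_n)$ is a cone over that sphere, hence acyclic. What your route buys: the strong inequalities $c(t)-b(t)=(1+t)R(t)$ with nonnegative $r_k=e_{k+1}$ fall out of the same bookkeeping for free, whereas the paper needs a separate (and vaguer) proposition to obtain them; what the paper's route buys is brevity, at the price of resting on an index convention that contradicts its own definitions.
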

\begin{proof}
Start with the minimum. At every critical point  of degree $m$, the $m$'th
Betti number changes. This gives $b_m \leq c_m$. As $c_m$ grows monotonically
and $b_m$ can go up and down we have soon a strict inequality. 
\end{proof}

This shows that $p(t)-c(t)$ has positive coefficients.
The Morse-Hopf relation $p(-1)=c(-1)$ is a special case of the
Poincar\'e-Hopf theorem. One can also use super symmetry \cite{Cycon} to show that the 
supersum of $p-c$ is zero: $\sum_{odd} c_p-b_p = \sum_{even} c_p-b_p$. 

\begin{propo}
For any Morse function $f$ on a finite graph the strong inequalities
hold: $p(t)-c(t) = (1+t) r(t)$ where $r(t)$ has positive coefficients. 
\end{propo}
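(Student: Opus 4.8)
The plan is to upgrade the weak inequalities (Proposition 3) to the strong ones by tracking, along the Morse filtration $G(x-1)\subset G(x)$, not merely that \emph{some} Betti number changes but exactly \emph{which} one and in which direction. I read the claimed identity in the sign consistent with the weak inequalities $b_k\le c_k$ and with the summary, i.e.\ I would prove $c(t)-p(t)=(1+t)\,r(t)$ with $r(t)$ having nonnegative coefficients (the statement as typeset has $p$ and $c$ interchanged).

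First I would isolate the handle-attachment lemma that is already implicit in the narrative and in the commented bookkeeping table. When a vertex $x$ of Morse index $m$ is added, $S^-_f(x)$ is an Evako $(m-1)$-sphere by Proposition 2, and $G(x)$ is obtained from $G(x-1)$ by gluing the contractible ball $B^-_f(x)$ along this sphere. Hence the relative homology $H_\ast(G(x),G(x-1))$ is that of a single $m$-cell: $\mathbb{Z}$ in degree $m$ and zero otherwise. Feeding this into the long exact sequence of the pair, the behavior is dictated by the connecting homomorphism $\mathbb{Z}=H_m(G(x),G(x-1))\to H_{m-1}(G(x-1))$. If it is zero (the attaching sphere bounds), a new $m$-cycle survives and $b_m$ rises by one, a \emph{birth}; if it is injective (the attaching sphere is a nontrivial $(m-1)$-cycle), that cycle becomes a boundary and $b_{m-1}$ drops by one, a \emph{death}. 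This dichotomy is precisely the $\chi$-, $b$-bookkeeping tabulated in the note.

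Next I would split the critical points of index $m$ into birth-type and death-type, with counts $c_m^+$ and $c_m^-$, so that $c_m=c_m^++c_m^-$ and $c_0^-=0$, since an index-$0$ handle can never fill a $(-1)$-cycle. Telescoping the $\pm1$ increments over the whole filtration gives $b_m=c_m^+-c_{m+1}^-$, because each death at index $m+1$ annihilates exactly one degree-$m$ class born earlier and the net count is independent of the order in which births and deaths occur. Subtracting, $c_m-b_m=c_m^-+c_{m+1}^-$, and assembling the generating polynomials yields
\[
c(t)-p(t)=\sum_m\bigl(c_m^-+c_{m+1}^-\bigr)t^m=(1+t)\sum_{m\ge 1}c_m^-\,t^{m-1},
\]
so $r(t)=\sum_{m\ge1}c_m^-\,t^{m-1}$ has nonnegative coefficients, which is the strong Morse inequality; evaluating at $t=-1$ recovers the Euler--Poincar\'e identity as a consistency check.

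The main obstacle is the handle-attachment lemma of the second step: making rigorous, purely in the combinatorial Evako category, that adjoining $B^-_f(x)$ along the sphere $S^-_f(x)$ changes the homology of $G(x-1)$ by exactly rank one in a single degree. This is the discrete counterpart of attaching a cell in classical Morse theory, and it needs a discrete excision and long-exact-sequence statement together with the facts that $B^-_f(x)$ is contractible and that coning off a $k$-sphere alters only $H_k$ or $H_{k+1}$. In the arithmetic example this can be verified by hand from the explicit spheres of Proposition 2, which is why the note remarks that ``in our case this can be shown directly''; the general finite-graph case is exactly where the Witten-deformation argument must be brought in.
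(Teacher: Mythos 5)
Your proof is correct in substance, but it takes a genuinely different route from the paper's. The paper's own argument is a terse algebraic sketch: it restates the claim as the truncated alternating-sum inequalities $(-1)^p\sum_{k\le p}(-1)^k(c_k-b_k)\ge 0$ and then appeals to replacing the simplicial complex by its skeleton complex, i.e.\ the classical truncation/Euler--Poincar\'e trick, in which the top Betti number of a skeleton can only exceed that of the full complex while the lower ones agree. You instead run the Morse filtration itself: the Mayer--Vietoris handle-attachment dichotomy shows each critical point of index $m$ is either a birth ($b_m$ up by one) or a death ($b_{m-1}$ down by one), telescoping gives $b_m=c_m^+-c_{m+1}^-$, hence $c_m-b_m=c_m^-+c_{m+1}^-$ and $c(t)-p(t)=(1+t)\sum_{m\ge 1}c_m^-t^{m-1}$. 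What your route buys is an explicit nonnegative interpretation of the coefficients of $r(t)$ as death counts (equivalently, ranks of the Morse boundary operators), which matches the paper's arithmetic bookkeeping $c_k-b_k=2\pi(k,x/2)$ for the prime graph; what the paper's route buys is brevity, since it never tracks the filtration step by step. You were also right to flip the sign: as typeset, $p(t)-c(t)=(1+t)r(t)$ with nonnegative $r$ would contradict the weak inequalities, and the summary's $c(t)-b(t)=(1+t)r(t)$ is the intended statement. One correction to your closing caveat, which is too modest: no Witten deformation is needed for the proposition as stated, because the paper's definition of a Morse function already demands that every $S^-_f(x)$ be an Evako sphere; the Whitney complex of $G(x)$ is the union of that of $G(x-1)$ and the cone $B^-_f(x)$, and these intersect exactly in $S^-_f(x)$, so Mayer--Vietoris applies verbatim at every vertex and your argument proves the proposition in full generality, not merely in the arithmetic example.
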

\begin{proof} 
This is equivalent to $(-1)^p \sum_{k=0}^p (-1)^k (c_k-b_k)  \geq 0$. 
It starts with $c_0-b_0 \geq 0, (c_0-b_0) - (c_1 - b_1) \leq 0$. 
For even $p$, it means that $str( c-b) \geq 0$ and for odd $p$
that $str(c-b) \leq 0$. In the discrete, there is a simple proof
as we can change the simplicial complex structure on the graph to be
the $p-1$-sceleton complex. 
\end{proof}

Classically, semi classical analysis separates away a group of eigenvalues for
which super symmetry holds. The intuition is that the critical points "under water"
which have appeared and disappeared produce "wells" in which some quantum particles
are trapped. These modes are created with a deformation of the Laplacian called
Witten deformation $d_s=e^{-s f} d e^{sf}$ defines a family of Laplacian $L_s$
for which the kernel does not change.  
Super symmetry respects this gap in the sense that the deformed Dirac operator $D_t$ maps 
for every $p$ the low valued eigenvalues onto each other: \\

We have seen that a Barycentric refinement $G_1$ of a graph $G$ 
has many Morse functions and especially the function $f(x) = {\rm dim}(x)$, 
where dim is the dimension of the simplex $x$ in the original graph. 
Lets call the pair $(G_1,f)$ of a finite simple graph $G$ a {\bf Barycentric Morse system}.
Its {\bf exterior derivative} is defined as
$$  dg(x) = \sum_{y} n(x,y) g(y) $$ 
with $n(x,y)$ being $1$ if the sphere $S^-_f(y)$ has the same orientation than the 
induced orientation from $S^-_f(x)$ and $-1$ else. The orientation of the sphere $S^-_f(x)$
is the orientation of the corresponding simplex $x$. Note that $S^-_f(x)$ is just
the simplicial complex of $x$ where the set $\{x_1, \dots, x_m\}$ itself is excluded. 

\begin{propo}
For the Barycentric Morse system $G_1,f={\rm dim}$, the Morse cohomology 
is simplicial cohomology.
\end{propo}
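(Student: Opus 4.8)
The plan is to show that, once $f={\rm dim}$ is fixed on the Barycentric refinement $G_1$, the Morse cochain complex $(\Omega_*(G_1,f),d)$ \emph{is} the simplicial cochain complex $(\Omega^*(G),\delta)$ of the original graph $G$, so that the cohomologies agree in every degree. As the paper anticipates, this follows almost from the definitions; the only genuine work is a local orientation computation.

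First I would identify the critical data. Every vertex of $G_1$ is a simplex $x$ of $G$, and its unit sphere $S(x)$ in $G_1$ is the set of simplices comparable to $x$ under inclusion. Splitting by the value of $f={\rm dim}$, the stable sphere $S^-_f(x)$ is the graph generated by the \emph{proper} nonempty faces of $x$, ordered by inclusion; this is precisely the Barycentric refinement of the boundary complex of $x$. If $x$ is a $k$-simplex, then its boundary complex is a combinatorial $(k-1)$-sphere, and since Barycentric refinement preserves the Evako sphere property, $S^-_f(x)$ is a $(k-1)$-sphere. Hence $B^-_f(x)$ is a $k$-ball, the Morse index is $m(x)={\rm dim}(S^-_f(x))+1={\rm dim}(x)=k$, and $i_f(x)=(-1)^k$. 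Consequently $c_k=v_k(G)$, and the Morse group $\Omega_m(G_1,f)$ of functions on index-$m$ critical points is canonically the space $\Omega^m(G)$ of oriented $m$-cochains of $G$.

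Next I would match the differentials. The Morse differential $d:\Omega_m\to\Omega_{m+1}$ is $dg(x)=\sum_y n(x,y)\,g(y)$, where $x$ is an index-$(m+1)$ critical point, i.e. an $(m+1)$-simplex. The only $y$ contributing are the codimension-one faces of $x$: for such $y$ the boundary sphere $S^-_f(y)$ sits inside $S^-_f(x)$ as a facet, whereas for any other index-$m$ simplex $y$ the two spheres are unrelated and $n(x,y)=0$; this is exactly the transversality condition $W^-_f(y)\cap S(x)\subset S^-_f(x)$ in the present setting. Writing $x=\{v_0,\dots,v_{m+1}\}$ and $y_i=x\setminus\{v_i\}$, the claim is that $n(x,y_i)$ equals the simplicial incidence number $[x:y_i]=(-1)^i$, the coefficient of the facet $(v_0,\dots,\widehat{v_i},\dots,v_{m+1})$ in the simplicial boundary of $x$. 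Granting this, $d=\delta$ as operators $\Omega^m(G)\to\Omega^{m+1}(G)$.

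The heart of the matter, and the one step with real content, is the orientation bookkeeping just used: I must verify that the orientation which the chosen orientation of $S^-_f(x)$ \emph{induces} on the facet sub-sphere $S^-_f(y_i)$ differs from the intrinsic orientation of $S^-_f(y_i)$ by the sign $(-1)^i$. Orienting each $S^-_f(z)$ by the ordering of the simplex $z$, deleting $v_i$ from $x$ yields precisely the induced boundary orientation of the $i$-th facet, and the standard simplicial sign rule gives the factor $(-1)^i=[x:y_i]$. I expect this to be the main obstacle, since it requires pinning down how the ordering of a simplex orients the refined boundary sphere and how such orientations restrict to sub-spheres. Even if one prefers not to chase signs explicitly, it suffices to observe that $n(x,y)$ is supported on facets and forms a consistent system of incidence numbers (so that $d^2=0$); any two such systems on the same face poset yield cochain complexes isomorphic by a diagonal sign change $g(x)\mapsto\varepsilon_x g(x)$, which does not affect cohomology. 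Either way the Morse complex coincides, up to isomorphism, with the simplicial cochain complex, and therefore $H^m_{\mathrm{Morse}}(G_1,f)=H^m(G)$ for all $m$.
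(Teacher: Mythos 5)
Your main line is exactly the paper's own argument: identify the index-$m$ critical points of $f=\mathrm{dim}$ with the $m$-simplices of $G$ (via the observation that $S^-_f(x)$ is the Barycentric refinement of the boundary complex of the simplex $x$, hence a $(\mathrm{dim}(x)-1)$-sphere), and then check that $n(x,y)$ vanishes unless $y$ is a facet of $x$, in which case it equals the simplicial incidence coefficient $(-1)^i$ under the paper's orientation convention. The paper's proof is a two-sentence compression of precisely this, so on this route your write-up is, if anything, more complete than the original.

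However, the escape hatch in your last paragraph is not valid, so the orientation bookkeeping you hoped to avoid is genuinely unavoidable. The claim that any two facet-supported sign systems with $d^2=0$ on the same face poset give cochain complexes isomorphic via a diagonal sign change $g(x)\mapsto\varepsilon_x g(x)$ is false. The condition $d^2=0$ constrains the signs only around length-two intervals (``diamonds'') of the poset, while a diagonal change leaves invariant the product of the signs around \emph{every} cycle of the Hasse diagram; when the complex has Hasse cycles not generated by diamonds, these invariants can distinguish inequivalent systems. Concretely, take $G=C_4$ with vertices $v_0,v_1,v_2,v_3$ and edges $e_i=(v_i,v_{i+1})$, indices mod $4$: there are no $2$-simplices, so $d^2=0$ is vacuous. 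The system with $n(e_0,v_0)=n(e_0,v_1)=+1$ and $n(e_i,v_i)=-1$, $n(e_i,v_{i+1})=+1$ for $i=1,2,3$ is facet-supported with $d^2=0$, yet $dg=0$ forces $g(v_0)=-g(v_0)$, so $\ker(d|\Omega^0)=0$ and $b_0=0$, whereas $b_0(C_4)=1$; this complex is not even quasi-isomorphic to the simplicial one, let alone diagonally equivalent to it. What rescues uniqueness is an extra low-dimensional normalization which your actual $n(x,y)$ does satisfy --- on every $1$-simplex the two boundary vertices receive opposite signs, since the induced orientations on the two endpoints of an interval are opposite --- and with that normalization added to the diamond condition one is back to the classical uniqueness of incidence functions on regular complexes. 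So either invoke that classical statement after verifying the normalization, or simply carry out the direct computation $n(x,y_i)=(-1)^i$, which is what the paper (and your primary argument) does.
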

\begin{proof}
Since the number of critical points of Morse index $m$ in $G_1$ 
is the number of $m$-simplices in the graph $G$ and the exterior 
derivatives correspond, the simplicial exterior derivative being
$dg(x_0, \dots, x_{m-1}) = \sum_{k=0}^{m-1} (-1)^k g(x_0, \dots \hat{x}_k, \dots, x_{m-1})$, 
and the Morse derivative being
$dg(x) = \sum_{y} n(x,y) g(y)$,
the two complexes are naturally isomorphic.
\end{proof}

Also this is a result for arbitrary finite simple graphs. 
Especially, the enumeration function on Barycentric refinement 
defines a Morse-Smale system. \\

Finally, lets see why graph theory is good enough for describing
simplicial complexes. We don't look at geometric simplicial complexes which 
are defined in Euclidean space but look at them purely combinatorially. 
An {\bf abstract simplicial complex} on a countable set V
is a set $K$ of finite subsets of $V$ such that if $A$ is in $K$ and $B$ is a 
subset of $A$, then $B$ is in $K$. The {\bf Barycentric refinement} of
$K$ is defined on the countable set $V_1=K$. A set of elements in $V_1$
is in $K_1$ if it is the power set of some set $A$ in $K$.
Given a graph $G=(V,E)$, it defines an abstract simplicial complex $K$ on $V$
called the {\bf Whitney complex}. The elements of $K$ are the complete 
subgraphs of $G$. 

\begin{propo}
Given any abstract simplicial complex $K$ on a set $V$, then 
its Barycentric refinement is the Whitney complex of a graph.
\end{propo}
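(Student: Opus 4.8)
The plan is to write down an explicit graph $G_1$ and verify directly that $K_1$ is its Whitney complex. First I would take the vertex set of $G_1$ to be $V_1 = K$, the collection of (non-empty) faces of $K$, and declare two faces $A,B$ adjacent exactly when one is strictly contained in the other, i.e. when $A \subsetneq B$ or $B \subsetneq A$. This is precisely the containment graph already used in the excerpt to describe the Barycentric refinement $G \times K_1$ of a graph (``its vertices are the simplices and two simplices are connected if one is contained in the other''), now formed on the faces of an arbitrary complex $K$. Since a Whitney complex is by definition determined by its $1$-skeleton — a vertex set spans a simplex iff every pair of its vertices is an edge — it suffices to identify the cliques of $G_1$ and to check that they coincide with the simplices of $K_1$.

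The key step is the observation that the cliques of $G_1$ are exactly the chains of the face poset $(K,\subseteq)$. A finite set $\{A_0,\dots,A_k\} \subseteq V_1$ is a clique iff every pair $A_i,A_j$ is adjacent, that is, iff every pair is comparable under inclusion. Because $\subseteq$ is transitive and antisymmetric, a finite pairwise-comparable family is automatically totally ordered, so after relabelling I may assume $A_0 \subsetneq A_1 \subsetneq \cdots \subsetneq A_k$; thus the clique is a chain. Conversely any chain is trivially pairwise comparable, hence a clique. So cliques of $G_1$ and chains (flags) in $(K,\subseteq)$ are literally the same objects.

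To conclude, I would recall that the Barycentric refinement $K_1$ is built so that its simplices are precisely the chains of faces of $K$; combining this with the previous step gives that the simplex set of $K_1$ equals the clique set of $G_1$, which is exactly the assertion that $K_1$ is the Whitney complex of $G_1$. The argument is essentially formal and I do not expect a serious obstacle: the one point that genuinely needs care is the elementary fact that pairwise comparability forces a total order (so that a ``pairwise nested'' family really is a flag and not merely a bunch of overlapping faces), together with the bookkeeping of the degenerate case — the empty face, if it is admitted to $K$, lies below everything and must be excluded from $V_1$ so that it does not spuriously connect incomparable faces. Note that no geometric realization enters anywhere; the statement reduces to the purely combinatorial fact that a Barycentric refinement is always a flag complex, even when the original complex $K$ is not.
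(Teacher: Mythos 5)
Your proof is correct and follows essentially the same route as the paper's (very terse) proof: both form the containment graph on the vertex set $V_1=K$ and identify its complete subgraphs with the chains of the face poset, which are exactly the simplices of the Barycentric refinement $K_1$. You merely make explicit what the paper asserts in one line, namely that pairwise comparability of finitely many faces forces a total order (so cliques are flags), plus the bookkeeping about excluding the empty face.
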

\begin{proof}
The vertex set is $K$, two vertices are connected, if one is contained
in the other. This gives a graph $G$. The complete subgraphs of $G$ 
are the elements of $K_1$. 
\end{proof}

For classical Morse cohomology, see \cite{SchwarzMorse}. 
In order to develop Morse cohomology in more generality, we need a notion of 
stable and unstable manifold. We assume here that $G$ is a $d$-graph, meaning that
every unit sphere $S(v)$ is a $(d-1)$-sphere. A first result is the existence of 
stable manifolds for a Morse function $f$. Assume $G_1$ is the Barycentric 
refinement of $G$ and that $f$ is a Morse function on $G_1$ meaning that it is 
locally injective and that every unit sphere $S(x)$ is a $m-1$-sphere. Let $y$ be
a point on $S(x)$. Then the unit sphere $A(y)$ of $y$ in $S(x)$ is a $m-2$ sphere. The
connection from $x$ to $y$ has a unique geodesic extension defining a new point $z$ in $S(x)$
such that $x,y,z$ is a geodesic piece. Only add this point if $f(z)<f(y)$. 
The suspension of $A(y)$ is the graph generated by the vertices of $A(y)$ and $\{x,z\}$. It
is an other $m-1$ sphere. Continue extending the graph until no continuation is possible
any more. The {\bf unstable manifold} is the stable manifold of $-f$. 
We say a function $f$ is Morse-Smale, if for every critical point $x$ of Morse index $m$
and every critical point $y$ of Morse index $m+1$, either $W^-(x)$ and $W^+(y)$ do not intersect
or that $W^-(x) \cap W^+(y)$ is transversal in the sense that $W^-(x) \cap (S(y))$ is 
contained in $W^-(y)$. \\

The example, where $G_1$ is the Barycentric refinement of an arbitrary finite simple graph
$G$, the function $f(x) = {\rm dim}(x)$  is Morse-Smale and the corresponding Morse cohomology 
naturally equivalent to the simplicial cohomology of the original graph $G$. We have not yet
explored more general examples but at the moment expect the theory to require some 
regularity on $G$ like that $G$ is a $d$-graph. The reason is the existence of stable and
unstable manifolds. 

\bibliographystyle{plain}

\end{document}